\newtheorem{theorem}{Theorem}
\newtheorem{corollary}[theorem]{Corollary}
\newtheorem{proposition}[theorem]{Proposition}
\newtheorem{lemma}[theorem]{Lemma}
\numberwithin{theorem}{section}
\numberwithin{corollary}{section}
\numberwithin{proposition}{section}
\numberwithin{lemma}{section}
\theoremstyle{definition}
\newtheorem{definition}[theorem]{Definition}
\newtheorem{conjecture}[theorem]{Conjecture}
\theoremstyle{remark}
\newtheorem{remark}[theorem]{Remark}
\newenvironment{example}
  {\pushQED{\qed}\examplex}
  {\popQED\endexamplex}
\newcommand{\mB}{\mathcal{B}}
\newcommand{\mS}{\mathcal{S}}
\newcommand{\mL}{\mathcal{L}}
\newcommand{\R}{\mathbb{R}}
\newcommand{\C}{\mathbb{C}}
\newcommand{\Q}{\mathbb{Q}}
\newcommand{\Z}{\mathbb{Z}}
\newcommand{\N}{\mathbb{N}}
\author{Stephen Melczer \and Marc Mezzarobba}
\date{\today}
\title{Sequence Positivity Through Numeric Analytic Continuation: Uniqueness of the Canham Model for Biomembranes}
\begin{document}
\maketitle

\begin{abstract}
We prove solution uniqueness for the genus one Canham variational problem arising in the shape prediction of biomembranes. The proof builds on a result of Yu and Chen that reduces the variational problem to proving non-negativity of a sequence defined by a linear recurrence relation with polynomial coefficients. We combine rigorous numeric analytic continuation of D-finite functions with classic bounds from singularity analysis to derive an effective index where the asymptotic behaviour of the sequence, which is positive, dominates the sequence behaviour. Positivity of the finite number of remaining terms is then checked computationally.
\end{abstract}

\section{Introduction}
\label{introduction}

An influential biological model of Canham~\cite{Canham1970} predicts the preferred shapes of biomembranes, such as blood cells, by solving a variational problem involving mean curvature. For a fixed genus%
\footnote{The model fixes a genus as experimental observations have found no topological changes in surfaces whose external systems evolve, at accessible time-scales.
Although genus zero biomembranes are more commonly observed in living organisms, genus one membranes can be observed under the microscope in laboratory settings~\cite[Sect.~4]{MichaletBensimon1995}.}
$g$ and constants~$a_0$ and $v_0$ determined by physical details, such as ambient temperature, the model of Canham asks one to find, among all orientable closed surfaces of genus $g$ of prescribed area $a_0$ and volume $v_0$, a surface $S$ minimizing the \emph{Willmore energy}
\begin{equation} W(S) = \int_S H^2 \, dA, \end{equation}
where $H$ is the mean curvature.
Other popular models of membrane shape prediction due to Helfrich~\cite{Helfrich1973} and Evans~\cite{Evans1974} ask for minimization of~$W(S)$ under different constraints (the Helfrich model adds a constraint, while the Evans model removes the volume constraint).
Because $W(S)$ is scaling invariant, prescribing the area $A(S)$ and volume $V(S)$ of the surface turns out to be equivalent to prescribing the \emph{isoperimetric ratio}
\[ \iota(S) = \pi^{1/6}\frac{\sqrt[3]{6V(S)}}{\sqrt{A(S)}} = \iota_0. \]
The \emph{isoperimetric inequality} states that $\iota(S) \in (0,1]$, with $\iota(S)=1$ achieved uniquely for the sphere.

The existence of a solution to the Canham model in genus $g=0$ and any $\iota_0 \in (0,1]$ was shown by Schygulla~\cite{Schygulla2012}, while Keller et al.~\cite{KellerMondinoRiviere2014} proved existence of solutions for higher genus and some values of $\iota_0$ between zero and one. Due to the apparent uniqueness of biomembrane shapes observed in experimental settings, it is natural to ask whether such a prediction model admits a unique solution.
Computational investigations of solution existence and uniqueness for the Canham model have been carried out in Seifert~\cite{Seifert1997} and Chen et al.~\cite{ChenYuBroganYangZigerelli2018}.
Recent work of Yu and Chen~\cite{YuChen2020} further investigates the uniqueness problem, showing that there are Canham models with non-homothetic solutions in genus $g\geq2$ and conjecturing solution uniqueness up to homothetic transformation in all genus zero and genus one settings.

\begin{conjecture}[{Yu and Chen~\cite[Conjecture~1.1]{YuChen2020}}]
\label{conj:YuChen1}
Let $\tau = \frac{3}{2^{5/4}\sqrt{\pi}}$. Up to homothetic transformation,
\begin{enumerate}[(i)]
	\item If $g=0$ and $\iota_0\in(0,1]$, or if $g=1$ and $\iota_0\in(0,\tau]$, then the Canham model has a unique solution given by a surface of revolution;
	\item If $g=1$ and $\iota_0 \in [\tau,1)$ then the Canham model has a unique solution defined by the stereographic image in $\R^3$ of the \emph{Clifford torus}
	\[ \left\{ \frac{1}{\sqrt{2}}{\Big[}\cos u,\sin u, \cos v, \sin v{\Big]}^T : u,v \in [0,2\pi] \right\} \subset \mathbb{S}^3. \]
\end{enumerate}
\end{conjecture}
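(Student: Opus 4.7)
The plan is to tackle only the genus one portion of the conjecture, since the abstract points to a reduction, due to Yu and Chen, that converts uniqueness in genus one into a statement about non-negativity of a specific sequence $(u_n)_{n\ge 0}$ satisfying a linear recurrence with polynomial coefficients. I will take this reduction as a black box: it will suffice to prove $u_n\ge 0$ for every $n\ge 0$. The genus zero case of Conjecture~\ref{conj:YuChen1} is not amenable to the approach sketched in the abstract (no analogous sequence is offered) and I would leave it aside.

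The recurrence makes the generating function $U(z) = \sum_{n\ge 0} u_n z^n$ a D-finite function, so classical singularity analysis applies. The first step is to identify the dominant singularities of $U(z)$ from the indicial equation of the differential operator annihilating it, compute the local expansions there, and read off an asymptotic formula of the shape $u_n \sim C \rho^{-n} n^\alpha (\log n)^k$. One then verifies that the leading constant $C$ is strictly positive. Once this is known, there is some index $N_0$ beyond which the leading term dominates all subdominant contributions, so $u_n>0$ for $n\ge N_0$; positivity of the finitely many values $u_0,\dots,u_{N_0-1}$ can then be checked by direct rational-arithmetic evaluation of the recurrence.

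The main obstacle, and the novel contribution the abstract advertises, is turning the soft statement ``leading asymptotic dominates eventually'' into an \emph{effective} threshold $N_0$ small enough that the finite check is actually feasible. Two ingredients are needed. First, the leading constant $C$ and the connection coefficients at the dominant singularities are transcendental numbers defined implicitly by analytic continuation of $U(z)$ from the origin: I would compute certified enclosures of them using rigorous numeric analytic continuation of D-finite functions along paths in $\mathbb{C}$ avoiding the singularities, so that in particular the enclosure of $C$ provably excludes zero. Second, I would invoke quantitative, effective transfer theorems from singularity analysis (in the spirit of Flajolet--Sedgewick, with explicit constants and explicit contour estimates) to bound the subdominant terms by a concrete function of $n$. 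Combining the validated numerical constants with the explicit tail estimate gives an inequality of the form ``asymptotic main term $\ge$ error bound for $n\ge N_0$'' that can be solved for $N_0$ directly.

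The delicate balance is that $N_0$ depends inversely on the precision of the certified bounds on $C$ and on how sharp the transfer-theorem constants can be made: sloppy bounds give an $N_0$ too large to verify. I expect the bulk of the work to lie in driving the precision of the numeric analytic continuation high enough, and in choosing the integration contour tightly enough around the relevant singularities, that $N_0$ comes out small enough for a machine computation of $u_0,\dots,u_{N_0-1}$ to finish. Once those two pieces are in place, the proof reduces to reporting the certified numerical output together with the finite positivity check.
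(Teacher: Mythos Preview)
Your proposal is correct and matches the paper's approach essentially point for point: the paper also takes Yu and Chen's reduction as a black box, locates the dominant singularity $\rho=3-2\sqrt{2}$ of the D-finite generating function, computes certified enclosures of the connection constants by rigorous numeric analytic continuation (via the \texttt{ore\_algebra} package), derives an explicit lower bound on the leading asymptotic term and an explicit upper bound on the remainder by quantitative Flajolet--Odlyzko contour estimates (big circle, small circle, linear segments), extracts an effective threshold ($N_0=1000$), and checks the remaining terms directly.

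One clarification of scope: the Yu--Chen reduction (Proposition~1.2 in their paper) only yields part~(ii) of the conjecture, i.e.\ $g=1$ with $\iota_0\in[\tau,1)$; the genus-one subcase of part~(i) ($\iota_0\in(0,\tau]$, surfaces of revolution) is \emph{not} covered by the sequence-positivity argument, so ``the genus one portion'' is slightly too broad. The paper itself proves only part~(ii) (stated as Corollary~\ref{cor:mainCor}) and leaves the rest of Conjecture~\ref{conj:YuChen1} open, just as you do.
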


The work of Keller et al.~\cite{KellerMondinoRiviere2014} mentioned above proves the existence of a solution to the genus one Canham model only for values of $\iota_0 \in [\tau,1)$, corresponding to Conjecture~\ref{conj:YuChen1}(ii).
After giving heuristic arguments for why Conjecture~\ref{conj:YuChen1} should hold, Yu and Chen reduce proving Conjecture~\ref{conj:YuChen1}(ii) to showing that a certain sequence of rational numbers has positive terms. More specifically, let~$(d_n)$ be the unique sequence with initial terms 
\[ (d_0,\dots,d_6) = \left(72, 1932, 31248, \frac{790101}{2}, \frac{17208645}{4}, \frac{338898609}{8}, \frac{1551478257}{4}\right) \] 
satisfying the explicit order seven linear recurrence relation 
\begin{equation}
  \label{eq:introrec}
  \sum_{i=0}^7 r_i(n)d_{n+i} = 0, \qquad r_j(n) \in \Z[n]
\end{equation}
defined in~\eqref{eq:mainRec} of the appendix.

\begin{conjecture}[Yu and Chen~\cite{YuChen2020}]
\label{conj:YuChen2}
All terms of the sequence $(d_n)$ defined by~\eqref{eq:mainRec} are positive.
\end{conjecture}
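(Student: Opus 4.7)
The plan is to derive an effective one-term asymptotic equivalent $d_n \sim C \rho^n n^\alpha$ with rigorously certified sign $C > 0$, together with an explicit upper bound on the relative error, so as to pinpoint a threshold $N$ beyond which positivity of the leading term forces $d_n > 0$. The finitely many remaining terms $d_0, \ldots, d_{N-1}$ are then checked directly by unrolling the recurrence~\eqref{eq:introrec} with exact rational arithmetic.

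First I would apply the classical Birkhoff--Trjitzinsky theory to write out a formal basis of solutions of~\eqref{eq:introrec} in the shape $\rho_i^n n^{\alpha_i} \sum_{k \ge 0} c_{i,k} n^{-k}$, indexed by the reciprocals of the roots of the leading polynomial $r_7$; this exposes the candidate dominant growth rate $\rho$ and exponent $\alpha$ of any solution to the recurrence. One checks at this stage that a single basis element attains the maximal modulus $\rho$, so that the dominant contribution is not oscillatory and a one-term asymptotic is well defined. The hard step, and the core of the argument, is then to determine \emph{rigorously} the coefficient of this dominant basis element in the expansion of $(d_n)$: this is a connection problem for the D-finite generating function $\sum d_n z^n$, and it admits no closed form. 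I would solve it by rigorous numeric analytic continuation along paths in $\C$ that avoid the singularities of the associated differential equation, working in ball arithmetic (as implemented, for example, in the \texttt{ore\_algebra} package in SageMath), propagating the certified initial data $d_0, \ldots, d_6$ to obtain enclosures of the connection constants, and in particular a strictly positive enclosure of $C$.

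Combined with classical effective tail bounds from singularity analysis of D-finite sequences, these enclosures yield an inequality of the form $|d_n - C \rho^n n^\alpha| \le B \rho^n n^{\alpha - 1}$ valid for all $n \ge N_0$, from which an explicit positivity threshold $N$ can be extracted. The principal obstacle is quantitative rather than conceptual: the certified enclosures and the tail bounds must simultaneously confirm $C > 0$ with a comfortable margin and keep the constant $B$ small enough that $N$ remains within reach of direct rational verification. Controlling the precision and path choice of the numeric analytic continuation, and sharpening the explicit constants in the singularity-analysis estimates, is therefore where the real work lies.
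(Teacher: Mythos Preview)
Your overall strategy---certify the dominant asymptotic via rigorous numeric analytic continuation, combine with effective singularity-analysis error bounds to get a threshold~$N$, then check $d_0,\dots,d_{N-1}$ directly---is exactly the route the paper takes. However, your plan as stated contains a concrete error about the shape of the dominant asymptotic that would cause it to fail for this particular sequence.

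You assert that ``a single basis element attains the maximal modulus~$\rho$'' and aim for a one-term equivalent $d_n \sim C\rho^n n^{\alpha}$. In fact the dominant singularity $\rho = 3 - 2\sqrt{2}$ of the generating function is a \emph{logarithmic} singular point: the local basis there contains both a solution behaving like $(z-\rho)^{-4}$ and one behaving like $(z-\rho)^{-4}\log(z-\rho)$, and the connection coefficients of~$f$ onto \emph{both} are nonzero. Consequently the true leading behaviour is
\[
  d_n \;=\; C_2'\,\rho^{-n} n^{3}\log n \;+\; C_1'\,\rho^{-n} n^{3} \;+\; O\!\bigl(\rho^{-n} n^{2}\log^{2} n\bigr),
\]
with both $C_1',C_2'>0$ certified by interval arithmetic. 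Your assumed form $C\rho^n n^{\alpha}$ misses the $\log n$, and the proposed error bound $B\rho^n n^{\alpha-1}$ misses the $\log^2 n$ in the remainder. On the Birkhoff--Trjitzinsky side, this corresponds to a resonance producing two formal solutions at the same exponential-polynomial scale, one with an extra $\log n$; so your check that a ``single'' dominant basis element exists would fail, and the plan would need to be reworked to handle the two-dimensional dominant subspace.

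A secondary point: the candidate exponential growth rates are not ``reciprocals of the roots of~$r_7$''. The polynomial $r_7(n)$ has roots in~$n$ (namely $-6,-7,-8,\dots$), which are irrelevant here; the growth rates come from the characteristic equation in the shift variable (equivalently, from the roots of the leading coefficient $c_3(z)$ of the associated ODE). The paper in fact works entirely on the ODE side, using Cauchy-integral contour deformation around~$\rho$ rather than the recurrence's formal-solution basis, which makes the explicit error constants easier to control.
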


\begin{proposition}[{Yu and Chen~\cite[Prop.~1.2]{YuChen2020}}]
If all terms of the sequence $(d_n)$ defined by~\eqref{eq:mainRec} are positive then Conjecture~\ref{conj:YuChen1}(ii) holds.
\end{proposition}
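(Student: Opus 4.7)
The plan is to reduce the variational uniqueness question to a one-dimensional rigidity statement whose sign is controlled by $(d_n)$. First I would invoke the existence theorem of Keller--Mondino--Rivi\`ere, which already produces, for every $\iota_0\in[\tau,1)$, a Willmore minimizer at genus one equal (up to homothety) to a stereographic image of a Clifford torus. To upgrade existence to uniqueness it suffices to rule out any \emph{second} critical point of the Willmore energy at the same $\iota_0$. I would parametrise the M\"obius orbit of the Clifford torus in $\R^3$ by a single real parameter $x$ (for instance, the distance from the projection pole to the axis of the resulting Dupin cyclide), so that the isoperimetric ratio varies monotonically along the family and the Lagrange-multiplier equation for stationarity of the Willmore energy subject to $\iota=\iota_0$ reduces to a scalar equation $F(x)=0$.

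The second step is to make $F$ analytically tractable. Since the area, enclosed volume and $\int H^2\,dA$ of a Dupin cyclide all admit closed forms in terms of complete elliptic integrals of $x$, the function $F(x)$ is D-finite, so its Taylor coefficients satisfy a linear recurrence with polynomial coefficients. A direct symbolic calculation, of the kind carried out in~\cite{YuChen2020}, identifies this recurrence (after clearing denominators and suitable rescaling) with~\eqref{eq:mainRec} and identifies the Taylor coefficients with the sequence~$(d_n)$. Positivity of every~$d_n$ then forces $F$ to be of constant sign on the open admissible interval, so the critical-point equation $F(x)=0$ admits only the root supplied by the known Clifford-torus solution, and uniqueness follows.

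The hard part is the symbolic-geometric bookkeeping of the first step: identifying the correct one-parameter family, imposing the isoperimetric normalisation so that the constraint collapses to a single scalar equation in $x$, and honestly extracting the order-seven recurrence~\eqref{eq:mainRec} from the underlying elliptic-integral calculus. This derivation is the substantive content of~\cite{YuChen2020}, which we import wholesale; the remaining analytic burden, namely the positivity of $(d_n)$ asserted in Conjecture~\ref{conj:YuChen2}, is what the present paper addresses.
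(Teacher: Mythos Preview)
The paper contains no proof of this proposition: it is quoted as \cite[Prop.~1.2]{YuChen2020} and the argument is deferred entirely to that reference. Your proposal and the paper therefore agree in the only way that matters here---both import the result from Yu and Chen---and you say as much yourself in your final paragraph.

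Two small comments on the sketch, for what they are worth. First, the paper only credits Keller--Mondino--Rivi\`ere with \emph{existence} of a genus-one minimizer for $\iota_0\in[\tau,1)$; it does not assert that their result already identifies the minimizer as a stereographic Clifford torus, so your opening sentence may be claiming more than is available before Yu and Chen's analysis. Second, the paper records (introduction and Remark~\ref{rem:fpositive}) that what Yu and Chen's argument actually needs is positivity of the \emph{function} $\sum_{n\ge 0} d_n z^n$ on $(0,3-2\sqrt{2})$, which is weaker than positivity of every coefficient; your sketch is compatible with this, since positive coefficients force the series to be positive on that interval, and it correctly locates the role of $(d_n)$ as controlling the sign of a scalar function on an interval.
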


The main result of this paper is to prove Conjecture~\ref{conj:YuChen2}, thus completing the uniqueness proof of the Clifford torus in the Canham model.

\begin{theorem}
\label{thm:mainthm}
All terms of the sequence defined by~\eqref{eq:mainRec} are positive.
\end{theorem}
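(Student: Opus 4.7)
The plan is to follow the standard asymptotics-plus-finite-check template for positivity of P-recursive sequences, with the crucial step of obtaining rigorous numerical data via validated analytic continuation of D-finite functions. Since $(d_n)$ satisfies a linear recurrence with polynomial coefficients, the generating function $D(z) = \sum_{n\geq 0} d_n z^n$ is D-finite and so satisfies a linear ODE whose singularities lie in the finite set of roots of the recurrence's leading coefficient $r_7$ (more precisely, of the ODE's leading polynomial, obtainable from \eqref{eq:introrec} by the usual transpose). I would first compute this candidate set, locate the singularities of minimal modulus $\rho$, and check by inspection that there is a single such dominant singularity (so no oscillatory cancellation between same-modulus contributions can occur).

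Next, at $\rho$ I would compute a Frobenius basis of local solutions of the ODE, read off the local exponents, and work out which one drives the asymptotics of $(d_n)$. The coefficient of $D(z)$ on this basis element — the connection constant $C$ — has no closed form a priori, and computing it rigorously is where numeric analytic continuation of D-finite functions is the key tool: following a path from $0$ to a neighbourhood of $\rho$ and propagating ball-valued initial data under the ODE via interval arithmetic on truncated series, one obtains certified enclosures of all connection constants. Provided the enclosure of $C$ excludes zero, classical singularity analysis (Flajolet--Odlyzko transfer theorems) then yields an asymptotic expansion of the form $d_n = C\, n^{\alpha} \rho^{-n} \bigl(1 + O(n^{-1})\bigr)$ with $C$ of certified sign, hence $d_n > 0$ for $n$ large enough.

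To convert this into a genuine proof rather than an asymptotic statement, I would replace the Flajolet--Odlyzko $O(\cdot)$ with an explicit effective remainder bound: a constant $K$ and a threshold $N_0$ such that for all $n \geq N_0$ the tail of the singular expansion, together with the contributions of every non-dominant singularity on the ODE, is bounded in modulus by $\tfrac{1}{2}|C| n^\alpha \rho^{-n}$. Standard effective versions of singularity analysis provide such bounds once the connection constants at all singularities have been enclosed and the Frobenius exponents are known. The remaining positivity check for $0 \leq n < N_0$ is then performed by unrolling \eqref{eq:introrec} in exact rational arithmetic and verifying $d_n > 0$ for each index.

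The main obstacle, as expected, is the numerical-rigour step: the ODE is of order seven and has several singularities, so one must enclose the dominant connection constant tightly enough to certify its sign while simultaneously bounding, with explicit constants, the cumulative contribution of all other singularities and of the tail at $\rho$. The trade-off is delicate, because the tighter one wishes the sub-leading bound to be (to keep $N_0$ small and the finite check feasible), the longer the series truncations, the more refined the integration paths, and the higher the working precision required in the ball-arithmetic propagation. Designing a path, a truncation order, and a precision that together make the certified $N_0$ small enough for a routine finite sweep is therefore the crux of the argument; once that is in hand, the structure above assembles into a proof of Theorem~\ref{thm:mainthm}.
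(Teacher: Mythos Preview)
Your high-level strategy matches the paper's exactly: locate the dominant singularity~$\rho$ of the generating function, compute the Frobenius basis there, obtain certified enclosures of the connection constants by rigorous numeric analytic continuation, make the singularity-analysis transfer effective to get an explicit threshold~$N_0$, and finish with a finite check. Two details in your write-up are off, though, and both matter for the execution.

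First, the ODE satisfied by the generating function is of order~\emph{three}, not seven; the seven is the order of the recurrence. (The paper does pass through an order-seven desingularized operator, but only briefly, to pin down the genuine singular set $\Xi=\{0,1,3\pm2\sqrt2\}$.) Second, and more substantively, the dominant local solution at $\rho=3-2\sqrt2$ is not a pure power: the Frobenius basis there contains $(z-\rho)^{-4}\log\bigl(1/(1-z/\rho)\bigr)$, so the leading asymptotic is
\[
  d_n \;\sim\; \rho^{-n} n^3\bigl(C_2\log n + C_1'\bigr),
\]
not $C\,n^\alpha\rho^{-n}(1+O(n^{-1}))$ as you wrote. Your plan would detect this once you actually compute the basis, but the stated template and error term would need to be revised accordingly.

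On the effective-bound step, the paper is somewhat more concrete than your appeal to ``standard effective versions of singularity analysis'': it splits $d_n$ via Cauchy's formula along an explicit Hankel-type contour (big circle of radius $R<2\rho$, small circle of radius $\rho/n$ around~$\rho$, connecting segments), bounds the remainder $g=f-\ell$ on each piece using certified sup-norms of the analytic parts $h_0,h_1,h_2$ on a disk around~$\rho$, and bounds~$f$ itself on the big circle by interval evaluation over a rectangle cover. This avoids having to enclose connection constants at the \emph{subdominant} singularities, which your outline suggests doing; staying inside $|z|<1$ sidesteps them entirely. The resulting threshold is $N_0=1000$.
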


\begin{corollary}
\label{cor:mainCor}
Up to homothetic transformation, any Canham model with genus $g=1$ and fixed isoperimetric ratio $\iota_0 \in [\tau,1)$ has a unique solution defined by the stereographic image in $\R^3$ of the Clifford torus.
\end{corollary}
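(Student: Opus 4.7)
The plan is to assemble the corollary from three pieces that are already in place in the paper and in the literature cited above. The first ingredient is Theorem~\ref{thm:mainthm}, which asserts that every term of the sequence $(d_n)$ defined by the initial data given in the introduction and the order-seven polynomial-coefficient recurrence~\eqref{eq:mainRec} is positive. The second ingredient is the proposition of Yu and Chen quoted just above (their Prop.~1.2), which asserts that this positivity statement implies Conjecture~\ref{conj:YuChen1}(ii). The third is the existence theorem of Keller, Mondino, and Rivière~\cite{KellerMondinoRiviere2014}, which guarantees that a minimizer exists in the genus-one Canham problem for $\iota_0 \in [\tau,1)$, so that ``unique solution'' is not vacuous.

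Concretely, I would first cite Theorem~\ref{thm:mainthm} to conclude $d_n > 0$ for all $n \geq 0$. Then I would apply the Yu--Chen proposition to deduce Conjecture~\ref{conj:YuChen1}(ii). The statement of that part of the conjecture is verbatim the statement of Corollary~\ref{cor:mainCor}: up to homothetic transformation, any Canham model with genus $g=1$ and $\iota_0 \in [\tau,1)$ admits a unique solution given by the stereographic image in $\R^3$ of the Clifford torus. Combining these, the corollary follows.

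Because the genuine mathematical content is absorbed into Theorem~\ref{thm:mainthm} and into the Yu--Chen reduction, there is no substantive obstacle at this point. The only care required is bookkeeping: one should check that the sequence $(d_n)$ named in Theorem~\ref{thm:mainthm} is literally the same sequence (same initial conditions and same recurrence) to which Yu and Chen's Prop.~1.2 applies, and that the conventions (closed orientable surface, fixed genus, homothety class, definition of $\iota$) match across the statements of Schygulla, Keller--Mondino--Rivière, Yu--Chen, and the present paper. Once these identifications are made, the corollary is a one-line synthesis, and I would present it as such.
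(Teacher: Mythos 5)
Your proposal is correct and matches the paper's implicit proof exactly: the corollary is the one-line synthesis of Theorem~\ref{thm:mainthm} with the quoted Yu--Chen reduction (their Prop.~1.2), together with the Keller--Mondino--Rivi\`ere existence result already discussed in the introduction. No divergence from the paper's approach.
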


Our proof aims to illustrate a general method to obtain asymptotic approximations with error bounds of sequences defined by recurrence relations of the type~\eqref{eq:introrec},
based on analytic combinatorics and rigorous numerics.
The method is implicit in the work of Flajolet and collaborators~\cite{FlajoletPuech1986,FlajoletOdlyzko1990,FlajoletSedgewick2009},
however, to the best of our knowledge, it has never been detailed or used in published work.
We also aim to illustrate the computational tools available to compute these bounds on practical applications.
A Sage notebook containing our calculations can be found at
\url{http://doi.org/10.5281/zenodo.4274505}
or, for an interactive version,
\begin{center}
\url{https://mybinder.org/v2/zenodo/10.5281/zenodo.4274505/?filepath=Positivity.ipynb}.
\end{center}

A more direct proof of Corollary~\ref{cor:mainCor} is also possible.
Indeed, the argument of Yu and Chen shows that it follows from the weaker condition that the power series $\sum_{n \geq 0} d_nz^n$ is positive for all $z \in (0,3-2\sqrt{2})$.
As outlined in Remark~\ref{rem:fpositive} of Section~\ref{sec:finalremarks}, this fact can be established using variants some of the arguments involved in the proof of Theorem~\ref{thm:mainthm}, without going through a full proof of positivity of the coefficient sequence.

\subsection{A Short History of Sequence Positivity}
\label{sec:history}

The study of positivity for recursively defined sequences has a long history, in combinatorics as well as mathematics and computer science more broadly. A full accounting of works on this topic would be more than enough to fill a survey paper (or textbook) so we aim only to highlight some specific problems close to our results and approach.

One of the oldest outstanding problems in this area is the so-called Skolem problem for \emph{C-finite sequences} (sequences satisfying linear recurrence relations with constant coefficients). Skolem's problem asks one to decide, given a C-finite sequence encoded by a linear recurrence with constant coefficients and a sufficient number of initial terms, whether any term in the sequence is zero. Because the term-wise product $(a_nb_n)$ of any two C-finite sequences $(a_n)$ and $(b_n)$ is also C-finite, Skolem's problem for a real sequence $(a_n)$ can be reduced to deciding when the C-finite sequence $(a_n^2)$ has only positive terms. Although the general term of a C-finite sequence can be algorithmically represented as an explicit finite sum involving powers of algebraic numbers, decidability of positivity has essentially been open since Skolem's work~\cite{Skolem1934} characterizing zero index sets of C-finite sequences in the 1930s. Skolem's problem has received great attention in the theoretical computer science literature, as the counting sequences of regular languages are always C-finite. See Kenison et al.~\cite{KenisonLiptonOuaknineWorrell2020} for an overview of the topic, together with some recent progress. 

For more general recurrence relations, Gerhold and Kauers~\cite{GerholdKauers2005} introduced a computer algebra procedure that tries to find an inductive proof of positivity in which the induction step can be automatically established using algorithms for cylindrical algebraic decomposition.
The special case of linear recurrence relations with \emph{polynomial} coefficients---like Yu and Chen's---was further studied by Kauers and Pillwein~\cite{KauersPillwein2010,Pillwein2013}, who gave extensions of the basic technique and sufficient conditions for termination%
\footnote{Thomas Yu informed us that the method described by Kauers and Pillwein fails in practice to prove positivity of our sequence~$d_n$, though it does apply to simpler sequences used in intermediary computations by Yu and Chen~\cite{YuChen2020}.}.
Another computer algebra method, due to Cha~\cite{Cha2014}, sometimes allows one to express solution sequences as sums of squares.
The present paper indirectly builds on a different family of algorithms, going back to Cauchy~\cite{Cauchy1842}, that provide \emph{upper} bounds on the magnitude of coefficients of power series solutions to various kinds of functional equations.
Singularity analysis, in a sense, allows us to ``turn upper bounds into two-sided ones'' and use them to derive positivity results.
We refer the interested reader to~\cite[Sec.~2.1]{Mezzarobba2019} for further references.

Finally, we mention that positivity of power series coefficients has long been of interest to analysts (in contexts not so different from the variation problem at the heart of Canham's model). For instance, during their 1920s work on solution convergence for finite difference approximations to the wave equation, Friedrichs and Lewy attempted to prove positivity of a three-dimensional sequence defined as the power series coefficients of a trivariate rational function; positivity was shown by Szeg{\"o}~\cite{Szego1933} using properties of Bessel functions. Askey and Gasper~\cite{AskeyGasper1972} and Askey~\cite{Askey1974} detail this problem and additional ones in a similar vein, and a vast generalization of Szeg{\"o}'s result was given by Scott and Sokal~\cite{ScottSokal2014}.

\section{Singular Behaviour and Eventual Positivity}
In order to reason about the sequence $(d_n)$ we encode it by its \emph{generating function},
\[ f(z) = \sum_{n\geq0}d_nz^n.\]
Because $(d_n)$ satisfies a linear recurrence relation with polynomial coefficients, $f(z)$ satisfies a linear differential equation with polynomial coefficients, and such a differential equation can be determined automatically: see~\cite[Sect.~VII. 9]{FlajoletSedgewick2009} or~\cite[Ch.~14]{BostanChyzakGiustiLebretonLecerfSalvySchost2017} for details. In this case, $f(z)$ satisfies a third-order differential equation
\[ c_3(z)F'''(z) + c_2F''(z) + c_1F'(z) + c_0F(z) = 0, \qquad c_j(z) \in \Z[z] \]
given explicitly in~\eqref{eq:mainDiff} of the appendix.

Because~\eqref{eq:mainDiff} is a \emph{linear} differential equation its formal power series solutions form a complex vector space of dimension at most three. Our particular generating function solution $F(z)=f(z)$ can be uniquely specified among the formal power series solutions of~\eqref{eq:mainDiff} by a finite number of initial conditions $F(0)=d_0, F'(0)=d_1, \dots$. Although $f(z)$ cannot be expressed easily in closed form, we can leverage its representation as a solution of~\eqref{eq:mainDiff} to compute enough information to prove positivity of $(d_n)$. Our computations are carried out in the Sage%
\footnote{
Available at
\url{http://sagemath.org/}.
We use SageMath version~9.1
(\href{https://doi.org/10.5281/zenodo.4066866}{\texttt{doi:10.5281/zenodo.4066866}},
Software Heritage persistent identifier
\href{https://archive.softwareheritage.org/swh:1:rel:5e11f7bf8344447a93ae043b915f3b25e62b7ed6/}%
{\texttt{swh:1:rel:5e11f7bf8344447a93ae043b915f3b25e62b7ed6}}%
).%
}
ore\_algebra%
\footnote{
Available at \url{https://github.com/mkauers/ore_algebra/}.
We use git revision \texttt{2d71b5} (Software Heritage persistent identifier
\href{https://archive.softwareheritage.org/swh:1:rev:2d71b50ebad81e62432482facfe3f78cc4961c4f/}%
{\texttt{swh:1:rev:2d71b50ebad81e62432482facfe3f78cc4961c4f}}).%
}
 package~\cite{KauersJaroschekJohansson2015,Mezzarobba2016}.

\begin{example}
The ore\_algebra package represents linear differential equations such as~\eqref{eq:mainDiff} as \emph{Ore polynomials}: essentially, polynomials in two non-commuting variables which encode linear differential operators. For instance, to load the package and encode the equation~\eqref{eq:mainDiff} one can enter

\begin{Verbatim}
sage: from ore_algebra import *
sage: Pols.<z> = PolynomialRing(QQ); Diff.<Dz> = OreAlgebra(Pols)
sage: deq = (25165779*z^15 - ... - 25165779*z^2)*Dz^3 + ... + (6341776308*z^12 - ... + 2701126946)
\end{Verbatim}
where each $\dots$ represents explicit input which is truncated here for readability. A term of the form \Verb#Dz^k# represents an operator taking $f(z)$ to its $k$th derivative.
\end{example}

We prove positivity of $d_n$ through comparison with its asymptotic behaviour.
We will soon see that the power series~$f(z)$ is convergent, and hence defines an analytic function, in a neighbourhood of zero in the complex plane; we also denote this analytic function by $f(z)$.
Dominant asymptotics are calculated using the transfer method of Flajolet and Odlyzko~\cite{FlajoletOdlyzko1990}, which shows how asymptotic behaviour of $d_n$ is linked to the singular behaviour of the analytic function~$f(z)$. In particular, to determine asymptotic behaviour of $d_n$ it is enough to identify the singularity of $f(z)$ with minimal modulus (in this case there is only one), compute a singular expansion of $f(z)$ in a region near this singularity, then transfer information from the dominant terms of this singular expansion directly into dominant asymptotic behaviour of $d_n$. 

The singular behaviour of $f(z)$ is constrained by the fact that it satisfies~\eqref{eq:mainDiff}.
The classical Cauchy existence theorem for analytic differential equations implies that analytic solutions of~\eqref{eq:mainDiff} can be analytically continued to any simply connected domain $\Omega \subseteq \C$ where the leading coefficient
\[ c_3(z) = 8388593z^2(z+1)^2(z-1)^3(z^2-6z+1)^2(3z^4 - 164z^3 + 370z^2 - 164z + 3) \]
of~\eqref{eq:mainDiff} does not vanish.
In fact, only a subset of these zeroes will be singularities of the solutions to~\eqref{eq:mainDiff}.

\begin{lemma} \label{lem:singularities}
If $\zeta \in \C$ is a singularity of a solution to~\eqref{eq:mainDiff} then $\zeta$ lies in the set
\[ \Xi = \{0,1,3\pm2\sqrt{2}\}. \]
\end{lemma}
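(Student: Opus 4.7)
The plan is to combine Cauchy's existence theorem, which places every singularity of a solution among the zeros of the leading coefficient $c_3(z)$, with a local analysis at each such zero. Factoring $c_3$ exhibits exactly nine candidate singularities: $0$, $-1$, $1$, the two roots $3 \pm 2\sqrt{2}$ of $z^2 - 6z + 1$, and the four roots of the irreducible quartic $q(z) = 3z^4 - 164z^3 + 370z^2 - 164z + 3$. Since $\Xi$ already contains four of these, the lemma reduces to showing that $-1$ and the four roots of $q$ are \emph{apparent} singularities of the ODE, i.e., that the local solution space at each such point consists entirely of functions holomorphic on a neighborhood of the point.

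To verify that a point $\zeta$ is apparent, I would carry out a Frobenius analysis at $\zeta$: compute the indicial polynomial obtained by substituting the ansatz $(z-\zeta)^e$, determine its roots (the local exponents), and check that the Frobenius recurrences produce three linearly independent formal solutions of the form $(z-\zeta)^e$ times a convergent power series, each with $e \in \Z_{\geq 0}$ and with no logarithmic terms. When the three exponents are distinct nonnegative integers and no accidental resonances force logarithms, this automatically yields a basis of three holomorphic solutions, certifying $\zeta$ as apparent. The computation is standard and is implemented in the ore\_algebra package (for instance through its local-basis routines), so it can be executed directly on the explicit operator of \eqref{eq:mainDiff}.

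Concretely, I would run the verification at $\zeta = -1$ over $\Q$, and then at the four roots of $q$ symbolically over the algebraic extension $\Q[z]/(q(z))$: because the operator has rational coefficients, one calculation over this extension simultaneously handles all four Galois-conjugate roots of $q$. The main obstacle to watch for is the possibility of logarithmic local solutions arising when two indicial exponents differ by a nonnegative integer; such logarithms would constitute a genuine singularity and would refute the claim, so one must confirm that the obstruction coefficients in the recurrences actually vanish. Assuming the local bases at $-1$ and at a generic root of $q$ each consist of three holomorphic series---as is predicted by the lemma---every candidate point outside $\Xi$ is apparent, and the lemma follows.
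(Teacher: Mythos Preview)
Your proposal is correct and follows a genuinely different route from the paper's proof. The paper does not analyze the candidate points one by one; instead it invokes the \texttt{desingularize()} routine of \texttt{ore\_algebra}, which produces an order-$7$ linear operator that annihilates every solution of~\eqref{eq:mainDiff} and whose leading coefficient is $(z-1)^2 z^2 (z^2-6z+1)^2$. Cauchy's existence theorem applied to this new operator then confines all singularities to~$\Xi$ in a single stroke. Your approach---a local Frobenius analysis at $-1$ and at the roots of the quartic~$q$, checking for nonnegative integer exponents and the absence of logarithmic terms---is the classical hands-on way to certify that a point is an apparent singularity, and it is perfectly valid here (indeed \texttt{local\_basis\_expansions} would carry it out). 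The trade-off is that desingularization gives a single global certificate (a new operator) at the cost of being somewhat opaque, whereas your method is conceptually transparent but requires a separate verification at each of the five extraneous roots, including working over the quartic extension $\Q[z]/(q)$. Either argument is acceptable; the paper simply opts for the one-line computer-algebra shortcut.
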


\begin{proof}
Following the Sage code above, the command
\begin{Verbatim}
sage: desing_deq.desingularize()
\end{Verbatim}
returns an order 7 linear differential equation, satisfied by all solutions of~\eqref{eq:mainDiff}, whose leading coefficient polynomial is $C(z) = (z - 1)^2z^2(z^2 - 6z + 1)^2$.
The stated conclusion then follows from the Cauchy existence theorem applied to this differential equation, as the roots of $C$ form the set $\Xi$.
\end{proof}

For a given $\zeta \in \Xi$ some solutions of the differential equation~\eqref{eq:mainDiff} may admit convergent power series expansions, while others may admit $\zeta$ as a singularity.
In the present case, for each $\zeta \in \Xi$, the Fuchs criterion~\cite[§55]{Poole1936} shows that $\zeta$ is a \emph{regular singular point} of the equation, meaning the equation admits a full basis of formal solutions of the form
\begin{equation}
  \label{eq:regsing}
  g(z) = z^{\nu} \sum_{n=0}^{\infty} \left(\sum_{k=0}^{\kappa} C_{n,k} \log^k\frac{1}{1-z/\zeta}\right)\left(z-\zeta\right)^{n},
\end{equation}
where $\nu \in \overline{\Q}$ (the field of algebraic numbers), $\kappa\in\N$, and each $C_{n,k}\in\C$.
In addition, the power series $\sum_{n=0}^{\infty} C_{n,k} (z-\zeta)^n$ all converge in a disk centered at~$\zeta$ and extending at least up to the closest other singular point.
Thus, the expression~\eqref{eq:regsing} defines an analytic function on a \emph{slit disk}~$\Delta_{\zeta}$ around $\zeta$ (a disk with a line segment from the center of the disk to the boundary removed).

\begin{remark} 
We always take $\log$ to mean the principal branch of the complex logarithm, defined by
\begin{equation} \label{eq:log}
  \log(r e^{i \theta}) = \log r + i \theta
  \quad \text{for $r > 0$ and $-\pi < \theta \leq \pi$.}
\end{equation}
The cut in~$\Delta_{\zeta}$ then points to the left, and any solution defined in a sector with apex at~$\zeta$ that does not intersect $\zeta + \R_{<0}$ has a singular expansion as a finite sum of terms of the form~\eqref{eq:regsing}, possibly with different~$\nu$.
\end{remark}

Methods dating back to Frobenius allow one to compute local series expansions of this type to any order for a basis of solutions (see \cite[Ch.~V]{Poole1936} for details).

\begin{example}
The point $z=1/2$ does not lie in $\Xi$ so, by Cauchy's theorem, all solutions of~\eqref{eq:mainDiff} have convergent power series expansions in disks around $z=1/2$. Continuing from the Sage commands above, running
\begin{Verbatim}
sage: deq.local_basis_expansions(1/2, order=5)
\end{Verbatim}
returns three truncated expansions
\begin{align*}
&\textstyle 1 - \frac{23463856}{144207}\left(z - \frac{1}{2}\right)^3 + \frac{1484883560}{1009449}\left(z - \frac{1}{2}\right)^4 + \cdots 
\\
&\textstyle \left(z - \frac{1}{2}\right) - \frac{10851808}{144207}\left(z - \frac{1}{2}\right)^3 + \frac{706529240}{1009449}\left(z - \frac{1}{2}\right)^4 + \cdots
\\
&\textstyle \left(z - \frac{1}{2}\right)^2 - \frac{97280}{6867}\left(z - \frac{1}{2}\right)^3 + \frac{5204338}{48069}\left(z - \frac{1}{2}\right)^4 + \cdots
\end{align*}
which begin convergent power series expansions at $z=1/2$ for a basis to the space of solutions defined on a small disk around~$1/2$.
\end{example}

\begin{example}
\label{ex:Abasis}
The point $z=0$ lies in $\Xi$, so solutions of~\eqref{eq:mainDiff} may have singularities at the origin. The command
\begin{Verbatim}
sage: deq.local_basis_expansions(0, order=3)
\end{Verbatim}
now returns truncated expansions
\begin{equation}
\begin{split}
A_1(z) &= \textstyle z^{-1}\log z - 9(\log z)^2 + 141\log z + z\left(\frac{475}{12} - \frac{483}{2} \log^2 z + 3471\log z\right) + \cdots\\
A_2(z) &= \textstyle z^{-1} - 18\log z + z\left(\frac{625}{2} - 483z\log z\right) + \cdots \\
A_3(z) &= \textstyle 1 + \frac{161}{6}z + \cdots
\end{split}
\label{eq:Abasis}
\end{equation}
for series converging in $\{ z: |z| < 3 - 2\sqrt{2}, z \notin \R_{\leq 0} \}$ which form a basis to the solution space of the differential equation.
Because the formal series $f(z)$ satisfies~\eqref{eq:mainDiff} it converges at the origin and can be written as a $\C$-linear combination of the $A_j$. Since $f(z)$ involves no logarithmic terms, and $f(0)=72$, we can represent $f$ in the $A_j$ basis as 
\[ f(z) = 0\cdot A_1(z) + 0\cdot A_2(z) + 72\cdot A_3(z). \qedhere \]
\end{example}

As stated above, we wish to find the singularity of $f(z)$ of minimal modulus, so we let $\rho = 3 - 2\sqrt{2}$ be the non-zero element of $\Xi$ with minimal modulus. 

\begin{example}
\label{ex:Bbasis}
The commands
\begin{Verbatim}
sage: rho = QQbar(3-2*sqrt(2))
sage: deq.local_basis_expansions(rho, order=3)
\end{Verbatim}
return truncated expansions
\begin{equation}
\begin{split}
B_1(z) &= \textstyle (z - \rho)^{-4}\log(z - \rho) - (z - \rho)^{-3}\left(\frac{5\sqrt{2}}{8} + 1 + \frac12 \log(z - \rho)\right) + \cdots\\
B_2(z) &= \textstyle (z -\rho)^{-4} - \frac{1}{2}(z - \rho)^{-3} + \cdots \\
B_3(z) &= \textstyle 1 - \left(\frac{5}{\sqrt{2}} + \frac{9}{2}\right)\left(z - \rho\right) + \cdots
\end{split}
\label{eq:Bbasis}
\end{equation}
for a basis of formal solutions at~$z=\rho$ of~\eqref{eq:mainDiff}.
These formal series converge in a disk around $z=\rho$ slit along the half-line $(-\infty, \rho]$.
Running the same command without the \Verb|order| parameter reveals that the terms not displayed here also involve $\log(z - \rho)^2$
and shows that no higher powers of $\log(z - \rho)$ can appear; i.e., in the notation of~\eqref{eq:regsing} we have $\kappa=2$.
\end{example}

\begin{remark} \label{rk:log}
The ore\_algebra package returns singular expansions which are linear combinations of powers of $(z-\zeta)$ and $\log(z-\zeta)$.
For singularity analysis, however, it is convenient to represent these expansions as linear combinations of powers of $(z-\zeta)$ and $\log\bigl(1/(1-z/\zeta)\bigr)$, so as to obtain expressions that are analytic in a slit neighbourhood of~$\zeta$ with the cut pointing away from~$0$.
For general $z$~and~$\zeta$, according to~\eqref{eq:log}, one has
\[
  \log \frac1{1-z/\zeta}
  = \log(-\zeta) - \log(z-\zeta) +
    \begin{cases}
      +2 \pi i, &0 < \arg(\zeta) \leq \arg(z - \zeta), \\
      -2 \pi i, &0 \geq \arg(\zeta) > \arg(z - \zeta), \\
      0, &\text{otherwise}.
    \end{cases}
\]
In the special case $\zeta = \rho$,
we obtain $\log((1-z/\rho)^{-1}) = \log(-\rho) - \log(z - \rho) + L$
with $L = 0$ when $\Im(z) \geq 0$ and $L = -2\pi i$ when $\Im(z) < 0$.
\end{remark}

The transfer theorems of Flajolet and Odlyzko~\cite{FlajoletOdlyzko1990} show how dominant asymptotics of $d_n$ can be immediately deduced from the singular expansion of $f$ near $z=\rho$.
The transfer theorems apply because, by Lemma~\ref{lem:singularities}, the function~$f$ extends analytically to the domain
\begin{equation}
  \label{eq:Delta}
  \Delta = {\{ z : |z| < 1 \}} \setminus [\rho, 1].
\end{equation}

The functions $\tilde B_1, \tilde B_2, \tilde B_3$ obtained by replacing
$\log(z - \rho)$
by
$\log((1-z/\rho)^{-1})$
in~\eqref{eq:Bbasis} form a basis of the solution space of~\eqref{eq:mainDiff} in a neighbourhood of~$\rho$ in~$\Delta$, and to determine asymptotics it is sufficient to represent~$f$ in the $\tilde B_j$~basis. Example~\ref{ex:Abasis}, which expressed $f$ in the $A_j$ basis, crucially relied on our knowledge of $f(z)$ near the origin, supplied by its power series coefficients $d_n$.
This argument does not apply at any non-zero point.
Fortunately, it is possible to compute the change of basis matrix between the $A_j$ and the~$\tilde B_j$ when viewed as solutions of~\eqref{eq:mainDiff} on the same domain contained in~$\Delta$.
By Remark~\ref{rk:log} each $\tilde B_j$ coincides with~$B_j$ in the upper half-plane, so for practical reasons we compute the change of basis matrix between the $A_j$ and $B_j$ bases.
This is implemented in ore\_algebra using rigorous numeric analytic continuation along a path.

The ore\_algebra package uses numeric approximations of real numbers certified to lie in intervals, as implemented in the Arb library~\cite{Johansson2017}.
In what follows, any expression of the form $[x \pm \epsilon]$ for $x\in\R$ and $\epsilon\geq0$ refers to an exact constant which is known to lie in the interval $[x + \epsilon, x - \epsilon]$.
The values displayed in the text are low-precision over-approximations of the intervals used in the actual computation.

\begin{example}
We select an analytic continuation path that goes from~$0$ to~$\rho$ without leaving the domain~$\Delta$,
and, because of the relation between $B_j$~and~$\tilde B_j$, that arrives at~$\rho$ from the upper half-plane. Using the polygonal path $\gamma = (0, i, \rho)$ for the required analytic continuation, the command 
\begin{Verbatim}
sage: M = deq.numerical_transition_matrix(path=[0, I, rho], eps=1e-20)
sage: [lambda1, lambda2, lambda3] =  M * vector([0, 0, 72])
\end{Verbatim}
computes the change of basis $M$ from the $A_j$ to the $B_j$ basis, then determines the rigorous approximations 
\begin{align*}
\lambda_1 &= [-0.0420 \pm 3.14\cdot 10^{-5}] + [\pm 1.21\cdot 10^{-14}]\,i, \\
\lambda_2 &= [-0.0141 \pm 3.22\cdot 10^{-5}] + [0.132 \pm 1.52\cdot 10^{-4}]\,i, \\
\lambda_3 &= [-12.5 \pm 0.0407] + [26.8 \pm 0.0117]\,i
\end{align*}
to the constants $\lambda_1,\lambda_2,\lambda_3$ such that
\[f(z) = \lambda_1B_1(z) + \lambda_2B_2(z) + \lambda_3B_3(z)
      = \lambda_1 \tilde B_1(z) + \lambda_2 \tilde B_2(z) + \lambda_3 \tilde B_3(z),\]
where all functions are implicitly extended by analytic continuation along~$\gamma$.
The expansions~\eqref{eq:Bbasis} from Example~\ref{ex:Bbasis} then give the initial terms of a singular expansion
\begin{equation*}
\begin{split}\
f(z) &= \bigl( [0.0598 \pm 4.79\cdot 10^{-5}] + [\pm 9.21\cdot 10^{-14}]\,i \bigr) (z-\rho)^{-4} \\
& + \bigl( [0.0420 \pm 3.14\cdot 10^{-5}] + [\pm 1.21\cdot 10^{-14}]\,i \bigr) (z-\rho)^{-4}\log\frac{1}{1-z/\rho}
+ \cdots,
\end{split}
\end{equation*}
where `$\cdots$' hides terms with factors $(z-\rho)^{\alpha} \log(z - \rho)^{\beta}$ where $\alpha \geq -3$ and $\beta \leq 2$.
Since $f$ is a real function the imaginary parts appearing in the coefficients are exactly zero, and
\begin{equation}
  \label{eq:fsingexpansion}
  f(z) = C_1 (z-\rho)^{-4} + C_2 (z-\rho)^{-4}\log\frac{1}{1-z/\rho} + \cdots
\end{equation}
for constants
\begin{align*}
  C_1 &= [0.0598 \pm 4.79\cdot 10^{-5}],&
  C_2 &= [0.0420 \pm 3.14\cdot 10^{-5}].
\end{align*}
The fact that the computed intervals containing $C_1$ and $C_2$ do not contain zero confirms that the analytic function~$f$ is singular at~$\rho$.
\end{example}

Corollary~5 of Flajolet and Odlyzko~\cite{FlajoletOdlyzko1990} gives an explicit formula for dominant asymptotics of $d_n$ in terms of the constants in the singular expansion~\eqref{eq:fsingexpansion}, leading to dominant asymptotic behaviour
\begin{equation}
  \label{eq:asy}
    \begin{aligned} 
    d_n
      &= \rho^{n-4} \frac{n^3}{6} \bigl( C_1 + C_2 (\log n - \gamma - 11/6) \bigr)
	  + O\left(\rho^{-n}n^2\log^2 n\right) \\
      &=  [8.07 \pm 1.96\cdot 10^{-3}] \; \rho^{-n}n^3\log n + [1.37 \pm 1.41\cdot 10^{-3}] \; \rho^{-n}n^3
      + O\left(\rho^{-n}n^2\log^2 n\right),
    \end{aligned}
\end{equation}
where $\gamma = [0.58 \pm 3.83\cdot 10^{-3}]$ is the Euler-Mascheroni constant. Although we have not computed the constants in closed form, this expansion shows that $d_n$ is \emph{eventually} positive. 
\begin{proposition}[Eventual Positivity]
There exists $N\in\N$ such that $d_n >0$ for all $n>N$. 
\end{proposition}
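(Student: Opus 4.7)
The plan is to read off eventual positivity directly from the asymptotic expansion~\eqref{eq:asy} established just above, so essentially no new analytic work is required. Rewrite that expansion as
\[ d_n = \alpha_1 \, \rho^{-n} n^3 \log n \;+\; \alpha_2 \, \rho^{-n} n^3 \;+\; O\!\left(\rho^{-n} n^2 \log^2 n\right), \]
where the constants are certified to lie in the enclosures $\alpha_1 \in [8.07 \pm 1.96\cdot 10^{-3}]$ and $\alpha_2 \in [1.37 \pm 1.41\cdot 10^{-3}]$ produced by the rigorous numerical analytic continuation. The critical observation is that the interval enclosing~$\alpha_1$ lies strictly above zero (its lower endpoint exceeds~$8$), so the sign of the dominant singular contribution is certified, not merely heuristically suggested.

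Given this, the argument reduces to a single-line comparison of growth orders. For $n \geq 2$ the factor $\rho^{-n} n^3 \log n$ is positive, so dividing through yields
\[ \frac{d_n}{\rho^{-n} n^3 \log n} \;=\; \alpha_1 \;+\; \frac{\alpha_2}{\log n} \;+\; O\!\left(\frac{\log n}{n}\right), \]
whose right-hand side tends to $\alpha_1 > 0$ as $n \to \infty$. Thus there is some index $N$ beyond which the ratio exceeds $\alpha_1/2 > 0$, and in particular $d_n > 0$ for every $n > N$. One could equally observe that the two explicit terms both have positive coefficients, so the $O$-term is the only ingredient that could spoil the sign, and by definition it is eventually dominated by the leading term.

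There is no real obstacle to this \emph{existence} statement; it is a qualitative consequence of the certified positive sign of the dominant coefficient in~\eqref{eq:asy}. The genuinely substantive difficulty, which the remainder of the paper must address, is to replace the qualitative $O$-symbol with an \emph{effective} numerical bound on the tail of the singular expansion, yielding a concrete value of~$N$. Once such an~$N$ is in hand, Theorem~\ref{thm:mainthm} will follow by directly checking $d_0, d_1, \ldots, d_N$ using the recurrence~\eqref{eq:introrec}.
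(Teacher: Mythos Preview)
Your argument is correct and matches the paper's approach exactly: the proposition is stated immediately after~\eqref{eq:asy} as a direct consequence of the certified positivity of the leading asymptotic coefficient, with no further proof given. You have simply written out the one-line comparison that the paper leaves implicit.
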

Because Conjecture~\ref{conj:YuChen1} asks us to prove \emph{all} terms of $d_n$ are positive, we must delve deeper. We determine a precise natural number $N$ such that the positive leading asymptotic term dominates the error in the asymptotic approximation for $n>N$, then computationally check the finite number of remaining values.

\section{Complete Positivity}

Our proof mirrors the constructive proofs of transfer theorems for asymptotic behaviour of sequences by Flajolet and Odlyzko~\cite{FlajoletOdlyzko1990}.
The starting point is the Cauchy integral formula.
Since $f$~is analytic on the domain~$\Delta$ defined in Equation~\eqref{eq:Delta},
the Cauchy integral formula gives the representation
\[ d_n = \frac{1}{2\pi i}\int_{|z|=\delta} \frac{f(z)}{z^{n+1}}dz \]
for any $0< \delta < \rho$ and all $n \geq 0$.
Asymptotic behaviour is determined by manipulating the domain of integration $\{|z|=\delta\}$ without crossing the singularities of the integrand, in such a way that the integral over part of the domain of integration is negligible while integration over the remaining part can be approximated by replacing $f(z)$ by its singular expansion at its singularity $z=\rho$ closest to the origin. 

Towards our explicit asymptotic bounds, let $\ell(z)$ denote the leading term in the singular expansion~\eqref{eq:fsingexpansion} of $f(z)$ at $z=\rho$, meaning
\begin{align*}
  \ell(z) = C_1 (z-\rho)^{-4} + C_2 (z-\rho)^{-4}\log\frac{1}{1-z/\rho}
\end{align*}
for the constants $C_1$ and $C_2$ in the singular expansion~\eqref{eq:fsingexpansion}. This expansion implies the existence of functions $h_0(z),h_1(z),$ and $h_2(z)$, analytic at $z=\rho$, such that 
\begin{equation}
f(z) = \ell(z) + \underbrace{(z-\rho)^{-3}\left(h_0(z) + h_1(z)\log\frac{1}{1-z/\rho} + h_2(z)\log^2\frac{1}{1-z/\rho}\right)}_{g(z)}.
\label{eq:hfunctions}
\end{equation}
Series expansions of the $h_j$ at $z=\rho$ can be computed to arbitrary order with coefficients rigorously approximated to any precision using series expansions of the $B_j$ basis at $z=\rho$ and the change of basis matrix $M$ from above.

\begin{remark} \label{rk:analyticity domains}
Since the origin is the closest element of $\Xi$ to $\rho$, the functions $h_0,h_1,$ and $h_2$ appearing in~\eqref{eq:hfunctions} are analytic on the disk
$|z - \rho| < \rho$.
Because $f$~and~$\ell$ are both analytic on~$\Delta$, so is $g = f - \ell$.
\end{remark}

Write
\[ d_n = \frac{1}{2\pi i}\int_{|z|=\delta} \frac{\ell(z)}{z^{n+1}}dz + \frac{1}{2\pi i}\int_{|z|=\delta} \frac{g(z)}{z^{n+1}}dz. \]
Behaviour of the first integral, which equals the $n$th power series coefficient of $\ell(z)$, is easily lower-bounded using standard generating function manipulations.

\begin{proposition} 
\label{prop:lowerbound}
For all $n\in\N$,
\[ \frac{1}{2\pi i}\int_{|z|=\delta} \frac{\ell(z)}{z^{n+1}}dz
\geq \rho^{-n} n^3 \, (8.07\, \log n + 1.37).\]
\end{proposition}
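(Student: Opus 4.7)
The integral equals the Taylor coefficient $[z^n]\ell(z)$ of $\ell$ at the origin, since $\ell$ is analytic on the open disk $|z| < \rho$. I would extract this coefficient in closed form and then bound it from below by elementary estimation. Substituting $u = z/\rho$ (and using $(z-\rho)^{-4} = \rho^{-4}(1-u)^{-4}$, the exponent being even) gives
\[
\ell(\rho u) = \rho^{-4}\Bigl(C_1(1-u)^{-4} + C_2(1-u)^{-4}\log\frac{1}{1-u}\Bigr),
\]
and the classical identities $[u^n](1-u)^{-4} = \binom{n+3}{3}$ and $[u^n](1-u)^{-4}\log(1/(1-u)) = \binom{n+3}{3}(H_{n+3} - H_3)$, with $H_3 = 11/6$, yield
\[
[z^n]\ell(z) = \rho^{-n-4}\binom{n+3}{3}\bigl(C_1 + C_2(H_{n+3} - 11/6)\bigr).
\]

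To reach the stated inequality I would apply $\binom{n+3}{3} \geq n^3/6$ for $n \geq 1$ (sharpened to $(n^3+6n^2)/6$ when needed) and $H_{n+3} \geq \log n + \gamma$, where the latter follows from the monotone decrease of $H_m - \log m$ to the Euler--Mascheroni constant $\gamma$. Substituting rigorous interval enclosures for $C_1$, $C_2$, and $\gamma$, and handling $\rho = 3-2\sqrt{2}$ exactly, interval arithmetic gives $\rho^{-4}C_2/6 \geq 8.07$, matching the $\log n$ coefficient in the target, while $\rho^{-4}(C_1 + C_2(\gamma - 11/6))/6$ evaluates to roughly $1.35$.

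The main obstacle is that this ``constant'' part falls just short of the target $1.37$, so the naive leading-order bound is not quite tight enough. The $\approx 0.02$ gap is closed by two positive corrections: the surplus of about $0.01$ in the $\log n$ coefficient supplies the deficit as soon as $\log n \geq 2$, i.e., $n \geq 8$; for smaller $n$, the sub-leading $n^2$ term in $\binom{n+3}{3}$ multiplied by $C_2 \log n$ contributes a positive $O(\log n / n)$ bonus that comfortably dominates the shortfall in each of the remaining finitely many cases. The case $n = 0$ is trivial, since the right-hand side vanishes while $[z^0]\ell(z) = \rho^{-4}C_1 > 0$.
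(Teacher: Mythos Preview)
Your approach is essentially identical to the paper's: both recognize the integral as $[z^n]\ell(z)$, extract it exactly via $\binom{n+3}{3}$ and $H_{n+3}-11/6$, and then apply $\binom{n+3}{3}\geq n^3/6$ together with $H_{n+3}\geq\log n+\gamma$ before substituting the interval values of $C_1$, $C_2$, $\gamma$, and $\rho^{-4}$.

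The one divergence is your handling of the constant term. The paper does not invoke any sub-leading correction; it simply asserts that the resulting lower bound matches the asymptotic~\eqref{eq:asy}, whose constant is $[1.37\pm1.41\cdot10^{-3}]$, and concludes. Your shortfall of roughly~$0.02$ is an artifact of working with the three-digit \emph{displayed} enclosures of $C_1$~and~$C_2$: the text states that these are low-precision over-approximations of the intervals actually used, and the quantity $(C_1+C_2(\gamma-11/6))/(6\rho^4)$ is extremely sensitive here because $C_1$ and $-C_2(\gamma-11/6)$ nearly cancel. With one more digit (e.g.\ $C_1\approx0.05983$, $C_2\approx0.04196$) the value already lands at~$1.37$, and the paper defers the rigorous verification to its accompanying Sage notebook. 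So your patch via the $n^2$ term of $\binom{n+3}{3}$ is correct but unnecessary once the constants are computed at adequate precision.
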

\begin{proof}
Proposition~\ref{prop:lowerbound} is proven in Section~\ref{sec:lowerboundproof}.
\end{proof}

After lower-bounding the integral of the leading term $\ell(z)$, which is positive for all $n$, we turn to upper-bounding the integral of the remainder $g(z)$.

\begin{proposition} 
\label{prop:upperbound}
For all integers $n\geq 1000$,
\[
  \left| \frac{1}{2\pi i}\int_{|z|=\delta} \frac{g(z)}{z^{n+1}}dz \right|
  \leq 1196 \, \rho^{-n} n^2 \log^2 n.
\]
\end{proposition}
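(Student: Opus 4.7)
The plan is to carry out the constructive Flajolet--Odlyzko transfer argument on the remainder $g$. Since $g$ is analytic on~$\Delta$ by Remark~\ref{rk:analyticity domains}, Cauchy's theorem lets me replace the contour $|z|=\delta$ by a Hankel-type contour $\Gamma$ that encloses the origin while hugging the slit $[\rho,1]$. Concretely, I would take $\Gamma$ to consist of an arc of the circle $|z|=R$ for some fixed $R$ with $\rho<R<1$ (say $R=1/2$), two straight segments running just above and just below the interval $[\rho+1/n,R]$, and a small circle of radius $1/n$ around $\rho$ closing the loop. The claimed inequality is then obtained by estimating $\int_\Gamma g(z)\,z^{-n-1}\,dz$ on each piece of $\Gamma$ separately and summing.

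On the outer arc, $|z|^{-n-1}\leq R^{-n-1}$ and $|g(z)|$ is bounded by an explicit constant on $|z|=R$ (a compact subset of~$\Delta$), so the contribution is of order $(\rho/R)^n\cdot\rho^{-n}$, exponentially smaller than the claimed bound for fixed $R>\rho$ and hence numerically negligible once $n\geq1000$. On the small circle around~$\rho$, one has $|(z-\rho)^{-3}|=n^3$, $|\log(1/(1-z/\rho))|\leq \log n+O(1)$, and $|z|^{-n-1}\leq(\rho-1/n)^{-n-1}=\rho^{-n-1}e^{1/\rho+o(1)}$. Since each $h_j$ is analytic on $|z-\rho|<\rho$, Cauchy's inequality---applied to the $h_j$ series obtained from the $B_j$ expansions of Example~\ref{ex:Bbasis} and the change-of-basis matrix $M$---supplies explicit constants $M_j$ with $|h_j(z)|\leq M_j$ on a fixed closed subdisk such as $|z-\rho|\leq\rho/2$. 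Multiplying by the contour length $2\pi/n$ yields a contribution of order $\rho^{-n}n^2\log^2 n$. On each of the two segments $\Gamma_\pm$, I would parameterize $z=\rho+s$ with $s\in[1/n,R-\rho]$, using the fact that the limiting values of $\log(1/(1-z/\rho))$ from above and below the cut differ by $2\pi i$, and bound the integrand by
\[
s^{-3}\bigl(M_0+M_1(|\log(s/\rho)|+\pi)+M_2(|\log(s/\rho)|+\pi)^2\bigr)(\rho+s)^{-n-1}.
\]
The integrals $\int_{1/n}^{R-\rho}s^{-3}(\log(s/\rho))^j(\rho+s)^{-n-1}\,ds$ are then evaluated via the substitution $s=u/n$, the classical finite Hankel estimate, yielding contributions of order $\rho^{-n}n^2(\log n)^j$ with explicit constants, dominated by the $j=2$ term.

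The main obstacle is keeping the assembled constant below $1196$. This demands sharp numerical bounds on $M_0,M_1,M_2$, obtained by rigorous analytic continuation of the solution basis from $0$ to $\rho$ and Cauchy-type tail bounds on the Taylor coefficients of the $h_j$ at $\rho$; it also demands that the lower-order error terms---the $(1-1/(n\rho))^{-n-1}$ factor on the inner circle, the $O(1)$ corrections inside the $\log$ bounds, and the subleading contributions of order $\rho^{-n}n^2\log n$ and $\rho^{-n}n^2$ from the segments---all be absorbed into the headline $\rho^{-n}n^2\log^2 n$ term. The explicit threshold $n\geq1000$ is precisely what makes this absorption possible with the stated constant, and the bulk of the work is the bookkeeping needed to verify it.
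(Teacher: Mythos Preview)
Your strategy is the paper's: deform the Cauchy circle into a Hankel contour (big arc $\mathcal B$, small circle $\mathcal S(n)$ around~$\rho$, and two connecting segments $\mathcal L(n)$), then bound each piece using uniform bounds on the~$h_j$ near~$\rho$ and a direct estimate of~$g$ on~$\mathcal B$. The paper packages these three estimates as Propositions~\ref{prop:upperboundS}, \ref{prop:upperboundL}, and~\ref{prop:upperboundB}, and obtains the constants $b_j$ bounding~$|h_j|$ via the rigorous tail-bound algorithm of~\cite{Mezzarobba2019} (Lemma~\ref{lem:upperboundondisk}), while~$|g|$ on~$\mathcal B$ is bounded by evaluating $f-\ell$ in interval arithmetic over a covering by rectangles.

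There is, however, a genuine inconsistency in your parameter choices. The decomposition $g(z)=(z-\rho)^{-3}\bigl(h_0+h_1\log+h_2\log^2\bigr)$ with $h_j$ \emph{analytic} is only available on the disk $|z-\rho|<\rho$ (Remark~\ref{rk:analyticity domains}); outside that disk the $h_j$ need not even be defined. With $R=1/2$ your segments run out to $|z-\rho|=R-\rho\approx 0.328>\rho\approx 0.172$, so the bounds $|h_j|\leq M_j$ you propose on $|z-\rho|\leq\rho/2$ simply do not cover the integrand on the whole of~$\mathcal L(n)$. The paper fixes this by taking $R$ just below $\rho+r$ with $r=1/8<\rho$, so that $\mathcal S(n)\cup\mathcal L(n)$ lies entirely in $|z-\rho|<r$ and the $b_j$ of Lemma~\ref{lem:upperboundondisk} apply throughout. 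This is easy to repair, but as written your segment estimate is not justified.

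Two smaller points: ``Cauchy's inequality'' is not what yields the $M_j$; you need an explicit tail bound on the local series of the $h_j$ at~$\rho$ (the paper uses the algorithm of~\cite{Mezzarobba2019} for this), and the ``explicit constant'' bounding $|g|$ on the big arc is the numerically delicate part---since $g$ has a branch discontinuity across $[\rho,1]$, it is not simply a maximum over a compact set, and the paper handles it by rigorous numerical evaluation of $f-\ell$ on a cover of the half-arc (Proposition~\ref{prop:upperboundB}). With those pieces made precise and $R$ chosen compatibly with the $h_j$ domain, your argument coincides with the paper's.
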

\begin{proof}
Proposition~\ref{prop:upperbound} follows from Propositions~\ref{prop:upperboundS},~\ref{prop:upperboundL} (in the limit $\varphi \to 0$), and~\ref{prop:upperboundB} in Section~\ref{sec:upperboundproofs}. 
\end{proof}

This immediately gives an explicit bound where asymptotic behaviour implies sequence positivity.

\begin{corollary}
One has $d_n > 0$ for all $n \in \N$.
\end{corollary}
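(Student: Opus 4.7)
The plan is to combine Propositions~\ref{prop:lowerbound} and~\ref{prop:upperbound} into a single quantitative statement guaranteeing positivity past an explicit threshold, and then to verify positivity for the remaining finitely many indices by direct computation from the recurrence~\eqref{eq:mainRec}.

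Concretely, from the decomposition
\[
  d_n = \frac{1}{2\pi i}\int_{|z|=\delta} \frac{\ell(z)}{z^{n+1}}\,dz + \frac{1}{2\pi i}\int_{|z|=\delta} \frac{g(z)}{z^{n+1}}\,dz
\]
used throughout the section, for every $n \geq 1000$ Propositions~\ref{prop:lowerbound} and~\ref{prop:upperbound} together yield
\[
  d_n \;\geq\; \rho^{-n} n^3 (8.07 \log n + 1.37) - 1196\, \rho^{-n} n^2 \log^2 n
      \;=\; \rho^{-n} n^2 \bigl( n(8.07 \log n + 1.37) - 1196 \log^2 n \bigr).
\]
Since $\rho^{-n} n^2 > 0$, it suffices to exhibit an integer $N \geq 1000$ at which the bracketed factor is strictly positive and beyond which it is non-decreasing. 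A short look at the derivative (the term $8.07 \log n + 9.44$ easily dominates $2 \cdot 1196 \log n / n$ for $n \geq 1000$) shows the bracket is increasing throughout $n \geq 1000$, and a single numerical evaluation at $n = 1000$ confirms strict positivity already there. Hence $d_n > 0$ for every $n \geq 1000$.

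For the remaining indices $0 \leq n < 1000$, I would compute $d_n$ directly from the recurrence~\eqref{eq:mainRec}, starting from the given initial values $(d_0, \dots, d_6)$ and using exact rational arithmetic, then inspect signs. This step requires only that the leading polynomial $r_7(n)$ of~\eqref{eq:mainRec} not vanish at any non-negative integer below $1000$, which is confirmed from its explicit factorisation. Combined with the asymptotic regime, this covers all $n \in \N$.

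The final assembly is essentially arithmetic; the genuine obstacle lies upstream, in making the constant $1196$ and the starting index $1000$ of Proposition~\ref{prop:upperbound} tight enough that the cross-over with the lower bound of Proposition~\ref{prop:lowerbound} happens at an index whose finite verification remains tractable. A looser upper bound or a much later starting index would push $N$ out of the range where computing $d_0, \dots, d_{N-1}$ by unrolling the recurrence is comfortable; the chosen parameters are calibrated precisely so that the positive asymptotic term takes over right at the edge of what one can certify by brute force.
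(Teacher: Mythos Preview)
Your argument is correct and follows essentially the same route as the paper: combine Propositions~\ref{prop:lowerbound} and~\ref{prop:upperbound} to get $d_n \geq \rho^{-n} n^2\bigl(n(8.07\log n + 1.37) - 1196\log^2 n\bigr)$ for $n \geq 1000$, observe that the bracket is increasing and positive at $n=1000$, and verify the remaining indices directly. The paper factors the bound as $\rho^{-n} n^2 \log^2 n\bigl(8.07\,n/\log n + 1.37\,n/\log^2 n - 1196\bigr)$ instead, but this is cosmetic; your extra remarks on the non-vanishing of $r_7$ and on the calibration of the constants are sound supplementary observations.
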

\begin{proof}
Propositions~\ref{prop:lowerbound} and~\ref{prop:upperbound} imply that
\[ d_n \geq \rho^{-n} n^2 \log^2 n \, \left(8.07\,\frac{n}{\log n} + 1.37\,\frac{n}{\log^2 n} - 1196\right) \]
for all $n \geq 1000$.
The final factor is increasing for $n \geq 8$ and positive at~$n=1000$, hence $d_n > 0$ for $n \geq 1000$.
One can explicitly check that $d_n > 0$ for $0 \leq n < 1000$.
\end{proof}

\subsection{Lower-Bounding the Leading Term Integral}
\label{sec:lowerboundproof}

If $a(z)$ is a complex-valued function analytic at the origin, we write $[z^n]a(z)$ for the $n$th term in the power series expansion of $a(z)$ centered at $z=0$.

\begin{proof}[Proof of Proposition~\ref{prop:lowerbound}]
Differentiating the geometric series $(1-z)^{-1} = \sum_{n \geq 0}z^n$ three times with respect to $z$ implies
\[ [z^n]\left(1-z\right)^{-4} = \frac{(n+1)(n+2)(n+3)}{6} \geq \frac{n^3}{6}, \]
while the identity
\[ \left(1-z\right)^{-4}\log\frac{1}{1-z} = \frac{d^3}{dz^3}\left(\frac{\log\bigl(1/(1-z)\bigr)}{6(1-z)} - \frac{11}{36(1-z)}\right) \]
implies 
\[ [z^n]\left(1-z\right)^{-4}\log\frac{1}{1-z} = \frac{(n+1)(n+2)(n+3)}{6}\left(H_{n+3} - \frac{11}{6}\right) ,  \]
where $H_n = \sum_{k=1}^n1/k$ is the $n$th harmonic number. Since $H_{n} \geq \log n + \gamma$, we obtain
\begin{align*}
[z^n]\ell(z)
&= [z^n]C_1(z-\rho)^{-4} + [z^n]C_2 (z-\rho)^{-4}\log\frac{1}{1-z/\rho} \\
&= C_1\rho^{-n-4}[z^n](1-z)^{-4} + C_2\rho^{-n-4}[z^n](1-z)^{-4}\log\frac{1}{1-z} \\
&\geq \left(\frac{C_1 + (\gamma-11/6) C_2}{6\rho^4}\right)n^3\rho^{-n} + \frac{C_2}{6\rho^4}n^3\rho^{-n}\log n.
\end{align*}
Note that this lower bound matches the leading asymptotic behaviour given in~\eqref{eq:asy}.
The proposition follows.
\end{proof}

\subsection{Upper-Bounding the Remainder Integral}
\label{sec:upperboundproofs}
\begin{figure}
  \centering
  \includegraphics{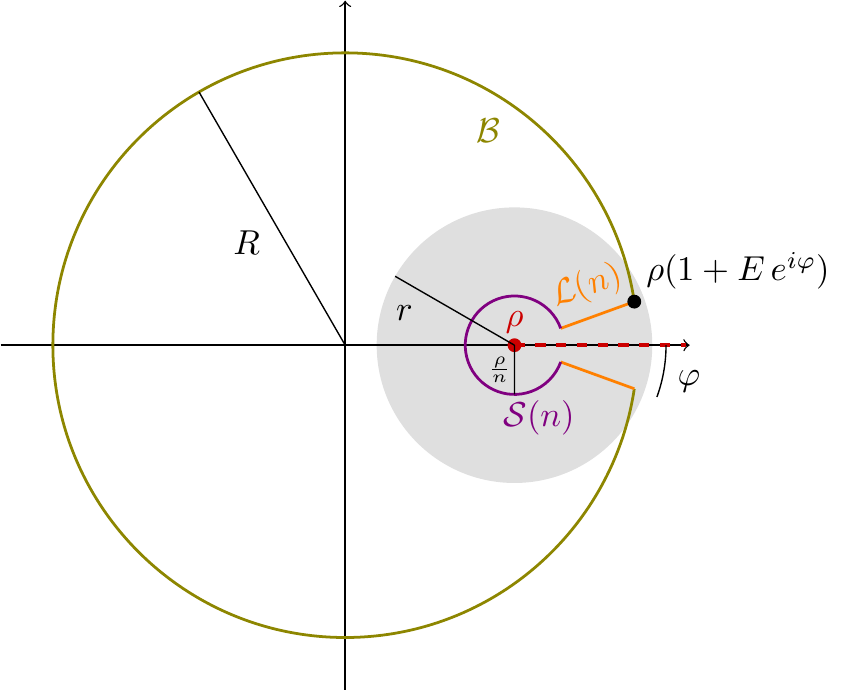}
  \caption{The Cauchy integral for the $n$th sequence term is deformed into the union of a big arc $\mB$ (in green), a small arc $\mS(n)$ (in purple), and two line segments $\mL(n)$ in orange. The series expansions of the basis~\eqref{eq:Bbasis} is defined on the disk $|z-\rho| < 1/8$ (in gray) with a line removed (in dashed red).}
  \label{fig:Domain}
\end{figure}

Following Flajolet and Odlyzko~\cite{FlajoletOdlyzko1990}, to upper-bound the integral of $g(z)$ we deform the domain of integration $|z|=\delta$, without crossing any singularities of the integrand, into 
\begin{itemize}
	\item An arc $\mB$ of a `big' circle of radius $R > \rho$,
	\item An arc $\mS(n)$ of a `small' circle of radius $\rho/n$,
	\item Two line segments $\mL(n)$ connecting the arcs of the big and small circles, supported by lines passing through~$\rho$ at small angles~$\pm\varphi$ with the positive real axis.
\end{itemize}
See Figure~\ref{fig:Domain} for an illustration.

To exploit the series expansions of the $h_j$ at $z=\rho$ we select $R$ so that, for large enough~$n$ and small enough~$\varphi$, the paths $\mS(n)$ and $\mL(n)$ lie within the disk of convergence of these expansions.
By Remark~\ref{rk:analyticity domains}, any $R < 2 \rho$ satisfies this constraint.
In view of the computation of error bounds, though, it is convenient to pick a radius~$r$ such that the
the punctured disk $0 < |z-\rho| < r$ does not contain any root of the leading polynomial of the differential equation~\eqref{eq:mainDiff}, and then choose~$R$ with $\rho < R < \rho + r$.
With this in mind, we take $r = 1/8 \approx 0.73 \rho$ and $R$ just smaller than~$\rho + r$.

\subsubsection{Bounding the Integrals Near the Singularity}

The first step towards our desired bounds is to upper-bound $h_0,h_1,$ and $h_2$ on the disk $|z-\rho|<r$. 

\begin{lemma}
\label{lem:upperboundondisk}
If $h_0,h_1,h_2$ are the functions defined in~\eqref{eq:hfunctions} then there exist constants
\begin{align}
b_0 &= [6.86 \pm 2.71\cdot 10^{-4}], &
b_1 &= [2.85 \pm 3.20\cdot 10^{-3}] &
b_2 &= [0.309 \pm 2.78\cdot 10^{-4}]
\label{eq:bconstants}
\end{align}
such that $|h_j(z)| \leq b_j$ for all $0 \leq j \leq 2$ and $z \in \{|z-\rho|<r\}$.
\end{lemma}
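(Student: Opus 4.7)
The plan is to derive the bounds by combining a truncated series expansion of each $h_j$ at~$\rho$ with a rigorous tail bound, both of which the ore\_algebra package can provide.

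First I would make explicit what the $h_j$ are in terms of the change-of-basis data already computed. Since $f(z) = \lambda_1 \tilde B_1(z) + \lambda_2 \tilde B_2(z) + \lambda_3 \tilde B_3(z)$ and the leading term $\ell(z)$ captures exactly the $(z-\rho)^{-4}$ contributions (with and without $\log\bigl(1/(1-z/\rho)\bigr)$) from $\lambda_1 \tilde B_1 + \lambda_2 \tilde B_2$, the decomposition~\eqref{eq:hfunctions} writes $g(z)/(z-\rho)^{-3}$ as a polynomial of degree at most~$2$ in $\log\bigl(1/(1-z/\rho)\bigr)$ whose coefficients $h_0, h_1, h_2$ are analytic functions. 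By the linear independence of $1, \log, \log^2$ over analytic functions, these coefficients are uniquely determined, and each $h_j$ can be read off as an explicit $\C$-linear combination of the Frobenius power-series factors of $B_1, B_2, B_3$ (with the truncated singular prefix removed in the cases of $B_1, B_2$), scaled by $\lambda_1, \lambda_2, \lambda_3$. By Remark~\ref{rk:analyticity domains}, each $h_j$ is analytic on the disk $|z - \rho| < \rho$, and in particular on the larger disk $|z - \rho| \leq r = 1/8 < \rho$.

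Next I would compute a truncated series
\[
  h_j(z) = \sum_{k=0}^{N-1} c_{j,k} (z-\rho)^k + R_{j,N}(z),
\]
to some order $N$, where the $c_{j,k}$ are rigorous complex intervals produced by \texttt{local\_basis\_expansions} at~$\rho$ combined with the interval values of $\lambda_1,\lambda_2,\lambda_3$ from the numeric transition matrix. The triangle inequality gives $|h_j(z)| \leq \sum_{k<N} |c_{j,k}|\, r^k + \sup_{|z-\rho|\leq r} |R_{j,N}(z)|$ for all $z$ with $|z-\rho|\leq r$.

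The remaining ingredient is a rigorous enclosure of the tail $R_{j,N}$. For this I would invoke the majorant-series machinery of ore\_algebra (the same machinery that underlies \texttt{numerical\_transition\_matrix}): since $h_j$ is a component of a solution of the desingularized linear ODE, and since the chosen radius $r = 1/8$ is strictly less than the distance from~$\rho$ to the nearest other singularity of that operator (the origin is at distance~$\rho > r$), one obtains an explicit majorant whose tail past order~$N$ decays geometrically with ratio $r/\rho_1$ for any $\rho_1 < \rho$. Taking $N$ large enough that this tail contributes a negligible amount compared to the desired bounds $b_0, b_1, b_2$ reduces the lemma to a finite interval arithmetic computation, which is then checked in Sage.

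The main obstacle is a bookkeeping one rather than a conceptual one: organising the Frobenius expansion at~$\rho$ so that the $\log^0, \log^1, \log^2$ components are separated cleanly, and propagating interval enclosures for $\lambda_1,\lambda_2,\lambda_3$ through the truncation so that the final widths of the intervals for $b_0, b_1, b_2$ remain as tight as displayed in~\eqref{eq:bconstants}. No further analytic input is required.
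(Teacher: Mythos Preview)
Your proposal is correct and follows essentially the same strategy as the paper: separate the logarithmic components of the local expansion at~$\rho$, truncate their power series to finite order, and bound the tails using the majorant-series machinery of~\cite{Mezzarobba2019} implemented in ore\_algebra. The only bookkeeping difference is that the paper first bounds the coefficients $u_j$ of powers of $\log w$ (with $w=z-\rho$) by a single common majorant and then converts to the~$h_j$ via the relation $\log\bigl(1/(1-z/\rho)\bigr)=\log(-\rho)-\log w$, which is why the displayed values~$b_j$ are all scalar multiples of one number; your direct treatment of each~$h_j$ would work equally well.
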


\begin{proof}
The bounds are computed using the implementation in ore\_algebra of the algorithm described in~\cite{Mezzarobba2019}.
Full details can be found in the accompanying Sage notebook.

In summary, the computation goes as follows.
Write the singular expansion of~$f$ at $\rho$ in the form
\[
  f(\rho + w) = \ell(\rho + w) + w^{-4} \left(
    u_0(w) + u_1(w) \log w + u_2(w) \frac{\log^2 w}{2}
  \right)
\]
for $\Im(w) > 0$.
In addition to the change of variable $z = \rho + w$, note that the logarithmic factors
$\log^j(w)/j!$
differ from the
$\log^j(1/(1-z/\rho)) = \log(-\rho) - \log w$
appearing in the definition~\eqref{eq:fsingexpansion} of the~$h_j$,
and that the polar factor $(z - \rho)^{-3}$ has become~$w^{-4}$, so that $u_j(0) = 0$ for all~$j$.

Let $\tilde u_j(w) = c_{j,1} \, w  + \dots + c_{j,49} \, w^{49}$ be the truncation at order~$50$ of the series expansion of~$u_j$.
We first compute numeric regions containing the coefficients of~$\tilde u_j$ and let
$m_0 = \sum_{k=1}^{49} \max_j |c_{j,k}| \, r^{k-1}$,
so that $|w^{-1} \, \tilde u_j(w)| \leq m_0$ in the disk $|w| \leq r$.

The next step is to bound the `tails' $u_j(w) - \tilde u_j(w)$.
After changing $z$ to $\rho + w$ in the differential equation~\eqref{eq:mainDiff}, we apply  \cite[Algorithm~6.11]{Mezzarobba2019} to the resulting differential operator.
The other relevant parameters are set to~$\lambda=-4$, $N=50$, and
$u_{-4+k} = c_{0, k} + c_{1, k} \log w + (c_{2, k}/2) \log^2 w$
for $k = 49, 48, \dots$,
using the previously computed~$c_{j,k}$.
The algorithm returns an expression~$\hat u(w)$ such that, 
by \cite[Proposition~6.12]{Mezzarobba2019}, the power series expansion
$\hat u(w) = \hat c_0 + \hat c_1 w + \cdots$
satisfies
$|c_{j,k}| \leq \hat c_k$
for $j = 0, 1, 2$ and $k \geq 50$.
In addition, it can be seen from the way $\hat u$ is constructed in the algorithm that $\hat c_0 = 0$ (in fact, $c_k = 0$ for $k \leq 49$).
We evaluate $w^{-1}\, \hat u(w)$ at $w=r$ using
\cite[Algorithm~8.1]{Mezzarobba2019}
to obtain, by the triangle inequality, a bound $m_1$ such that
$\bigl|w^{-1}\bigl(u_j(w) - \tilde u_j(w)\bigr)\bigr| \leq m_1$
for $|w| \leq r$.

Adding the bounds, we have
$|w^{-1} \, u_j(w)| \leq m_0 + m_1$
for $|w| \leq r$.
Finally, letting $a = \log(-\rho)$, the expressions of the~$h_j$ in terms of the~$u_j$ read
\[
  h_0(z) = w^{-1} \left(u_0(w) + a u_1(w) + \frac{a^2 u_2(w)}{2}\right), \quad
  h_1(z) = w^{-1} \bigl(-u_1(w) - a u_2(w) \bigr), \quad
  h_2(z) = w^{-1} \frac{u_2(w)}{2}.
\]
We can hence take
$b_j = d_j\,(m_0 + m_1)$
where
$d_0 = (|a| + |a|^2/2)$,
$d_1 = (1 + |a|)$,
and
$d_2 = 1/2$.
\end{proof}

\begin{definition}
Let $B$ be the quadratic polynomial $B(z) = b_0 + b_1z + b_2z^2$, where the $b_0,b_1,$ and $b_2$ are the constants in~\eqref{eq:bconstants}.
\end{definition}

The bounds on the $h_j(z)$ in Lemma~\ref{lem:upperboundondisk} allow us to bound the integrals of $g(z)$ over $\mS(n)$ and $\mL(n)$. 

\begin{proposition}
\label{prop:upperboundS}
For all integers $n \geq 5$,
\[ \left|\frac{1}{2\pi i}\int_{\mS(n)} \frac{g(z)}{z^{n+1}}dz\right|
\leq \rho^{-n} n^2\; \frac{4}{\rho^3}B(\pi + \log n). \]
\end{proposition}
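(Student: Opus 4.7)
The plan is to apply the ML-inequality directly on $\mS(n)$, using Lemma~\ref{lem:upperboundondisk} to bound the analytic factors~$h_j$ and handling the logarithms and the factor $1/z^{n+1}$ by hand. Parameterise the small arc as $z = \rho + (\rho/n) e^{i\theta}$ for $\theta$ ranging over a subset of $(-\pi,\pi]$, so that $|z-\rho|=\rho/n$ and the arc length is at most $2\pi\rho/n$.

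The key pointwise estimates are as follows. First, $|(z-\rho)^{-3}| = n^3/\rho^3$. Next, since $1 - z/\rho = -(1/n)e^{i\theta}$, the principal branch satisfies $\log(1/(1-z/\rho)) = \log n - \log(-e^{i\theta})$, whose imaginary part lies in $(-\pi,\pi]$, giving $|\log(1/(1-z/\rho))| \leq \log n + \pi$. Third, for $n \geq 2$ one has $\rho/n < 1/8 = r$, so Lemma~\ref{lem:upperboundondisk} gives $|h_j(z)| \leq b_j$ on $\mS(n)$. Combining these three facts through~\eqref{eq:hfunctions} yields
\[ |g(z)| \leq \frac{n^3}{\rho^3}\bigl(b_0 + b_1(\log n + \pi) + b_2 (\log n + \pi)^2\bigr) = \frac{n^3}{\rho^3}\, B(\pi + \log n). \]

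Finally I would lower-bound $|z|$ on $\mS(n)$ by the reverse triangle inequality: $|z| \geq \rho - \rho/n = \rho(1 - 1/n)$, so $|z|^{-(n+1)} \leq \rho^{-(n+1)}(1-1/n)^{-(n+1)}$. Applying the ML-inequality yields
\[ \left|\frac{1}{2\pi i}\int_{\mS(n)} \frac{g(z)}{z^{n+1}}\,dz\right| \leq \frac{1}{2\pi}\cdot \frac{2\pi\rho}{n}\cdot \frac{n^3}{\rho^3}B(\pi + \log n) \cdot \frac{1}{\rho^{n+1}(1-1/n)^{n+1}} = \rho^{-n}\frac{n^2}{\rho^3}B(\pi + \log n)\,(1-1/n)^{-(n+1)}. \]
The remaining step is to absorb the factor $(1-1/n)^{-(n+1)}$ into the constant~$4$ stated in the proposition. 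The sequence $(1-1/n)^{n+1}$ is elementary-increasing to $1/e$, so its reciprocal is decreasing; at $n=5$ it equals $(5/4)^6 = 15625/4096 < 4$, and this is precisely the reason for the hypothesis $n \geq 5$.

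The main obstacle is really just the careful handling of the logarithm bound and the factor $(1-1/n)^{-(n+1)}$; the hypothesis $n \geq 5$ is dictated by needing this factor to be at most~$4$, while $n \geq 2$ already suffices to place $\mS(n)$ inside the disk of convergence where Lemma~\ref{lem:upperboundondisk} applies. Everything else is a routine application of the ML-inequality.
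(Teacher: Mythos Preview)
Your proof is correct and essentially identical to the paper's own argument: both parametrise $\mS(n)$ as $z=\rho+(\rho/n)e^{i\theta}$, bound the logarithm by $\pi+\log n$, invoke Lemma~\ref{lem:upperboundondisk} for the $h_j$, lower-bound $|z|$ by $\rho(1-1/n)$, and then observe that $(1-1/n)^{-(n+1)}$ is decreasing with value below~$4$ at $n=5$. The only cosmetic difference is that you explicitly note $n\geq 2$ already suffices for $\rho/n<r$, which the paper leaves implicit.
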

\begin{proof}
Let $n \geq 5$.
Parametrizing $|z-\rho|=\rho/n$ by $z = \rho + \rho e^{i\theta}/n$ we have $|z| \geq \rho(1-1/n)$ and 
\[ \left|\log\frac{1}{1-z/\rho}\right| = \left|\log\left(e^{-i\theta}\right) - \log n\right| \leq \pi + \log n,\]
so, using the fact that $\rho/n < r$,
\begin{align*}
\left|\frac{1}{2\pi i}\int_{\mS(n)} \frac{g(z)}{z^{n+1}}dz\right|
&\leq \frac{\text{length}(\mS(n)) \; (n/\rho)^3}{2\pi\rho^{n+1}(1-1/n)^{n+1}} \max_{z \in \mS(n)} \left|h_0(z) + h_1(z)\log\frac{1}{1-z/\rho} + h_2(z)\log^2\frac{1}{1-z/\rho}\right| \\[+2mm]
&\leq \rho^{-n}n^2(1-1/n)^{-n-1} \rho^{-3}\left(b_0 + b_1(\pi+\log n) + b_2(\pi+\log n)^2\right).
\end{align*}
The factor $(1-1/n)^{-n-1}$ is decreasing, and less than~$4$ for $n=5$.
\end{proof}

\begin{proposition}
\label{prop:upperboundL}
For all integers $n\geq 2$ and all small enough~$\varphi$,
\[ \left|\frac{1}{2\pi i}\int_{\mL(n)} \frac{g(z)}{z^{n+1}}dz\right| \leq  \rho^{-n}n^2 \cdot \frac{B(\pi + \log n)}{\pi\rho^3 \cos\varphi}. \]
\end{proposition}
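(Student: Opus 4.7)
The plan parallels the proof of Proposition~\ref{prop:upperboundS}, with modifications for the straight-line geometry. First I would parametrize the upper line segment of $\mL(n)$ as $z(t) = \rho + te^{i\varphi}$ for $t \in [\rho/n, t^*]$, where $t^*$ is the value at which the line meets the big circle of radius~$R$; the lower segment is the mirror image $z(t) = \rho + te^{-i\varphi}$ on the same range. Because $R < \rho + r$, we have $t^* < r$, so both segments lie entirely within the disk $|z - \rho| < r$ where Lemma~\ref{lem:upperboundondisk} furnishes the bounds $|h_j(z)| \leq b_j$.

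Next I would bound each factor of
\[
  g(z) = (z-\rho)^{-3}\Bigl(h_0(z) + h_1(z) \log\tfrac{1}{1-z/\rho} + h_2(z) \log^2\tfrac{1}{1-z/\rho}\Bigr).
\]
The polar factor contributes $|(z-\rho)^{-3}| = t^{-3}$. For the logarithmic factor, the branch convention of Remark~\ref{rk:log} yields $\log(1/(1-z/\rho)) = \log(\rho/t) + i(\pi - \varphi)$ on the upper segment, and since $t \leq t^* < \rho$ we have $\log(\rho/t) \geq 0$, whence $|\log(1/(1-z/\rho))| \leq \log(\rho/t) + \pi \leq \log n + \pi$ for $t \geq \rho/n$. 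Combining with Lemma~\ref{lem:upperboundondisk} then gives $|g(z)| \leq t^{-3} B(\pi + \log n)$. Finally, $|z|^2 = \rho^2 + 2\rho t\cos\varphi + t^2 \geq \rho^2$ since $\cos\varphi > 0$, so $|z|^{-(n+1)} \leq \rho^{-(n+1)}$.

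Putting this together with $|dz| = dt$ and summing over the two symmetric segments, the integral is bounded by
\[
  \frac{1}{2\pi} \cdot 2 \int_{\rho/n}^{t^*} \frac{B(\pi + \log n)}{t^3 \rho^{n+1}} \, dt \leq \frac{B(\pi + \log n)\, n^2}{2\pi \rho^{n+3}},
\]
using $\int_{\rho/n}^\infty t^{-3}\,dt = n^2/(2\rho^2)$. This already has the shape of the claimed inequality; the extra $\sec\varphi$ factor in the stated bound arises from a mildly looser step in the bookkeeping---for instance, changing the integration variable from arclength $t$ to the real-axis projection $u = t\cos\varphi$ introduces a $\sec\varphi$ Jacobian in $|dz|$---which ensures the inequality holds uniformly for all small $\varphi > 0$ rather than only in the limit $\varphi \to 0$ subsequently taken in the proof of Proposition~\ref{prop:upperbound}.

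The main obstacle, beyond careful branch-cut bookkeeping for the logarithm on~$\Delta$, is confirming that the geometric configuration is consistent: $\varphi$ must be small enough that the rays from~$\rho$ at angles~$\pm\varphi$ reach the big circle while remaining in the disk $|z-\rho| < r$ where the $b_j$ bounds from Lemma~\ref{lem:upperboundondisk} apply, and the lower bound $|z| \geq \rho$ must persist along the entire segments. Both hold easily given the numerical values $\rho = 3 - 2\sqrt 2 \approx 0.172$ and $r = 1/8$, and the argument requires no new ideas beyond those already developed for Proposition~\ref{prop:upperboundS}.
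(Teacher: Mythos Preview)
Your argument is correct and in fact yields a slightly sharper bound than the one stated: your final estimate $\dfrac{\rho^{-n} n^2 B(\pi+\log n)}{2\pi\rho^3}$ is at most $\dfrac{\rho^{-n} n^2 B(\pi+\log n)}{\pi\rho^3\cos\varphi}$ for every $\varphi$, so the proposition follows immediately. The route, however, differs from the paper's. You discard the gain in $|z|^{-(n+1)}$ along the segment by using the crude $|z|\geq\rho$, and instead exploit the $t^{-3}$ polar factor to integrate $\int_{\rho/n}^\infty t^{-3}\,dt=n^2/(2\rho^2)$. The paper does the opposite, following the Flajolet--Odlyzko template: it retains the factor $(1+t\cos\varphi/n)^{-(n+1)}$ coming from $|z|^{-(n+1)}$, drops the $t^{-3}$, and evaluates $\int_1^\infty (1+t\cos\varphi/n)^{-(n+1)}\,dt=\frac{1}{\cos\varphi}(1+\cos\varphi/n)^{-n}$ explicitly. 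That computation is the actual source of the $\sec\varphi$ in the stated bound, not a Jacobian from reparametrization or a uniformity requirement; your own bound is already uniform in~$\varphi$. Your approach is simpler and loses nothing here because the $t^{-3}$ alone already makes the integral converge; the paper's version has the advantage of matching the standard transfer-theorem machinery, which also covers cases where the singular exponent does not by itself guarantee convergence.
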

\begin{proof}
Fix $n \geq 2$.
The integral over the upper part of~$\mL(n)$ equals
\[
L_+(n) = \frac{1}{2\pi i} \int_{\rho(1+e^{i\varphi/n})}^{\rho(1+E\,e^{i\varphi/n})} \frac{g(z)}{z^{n+1}} dz
= 
\frac{1}{2\pi i} \sum_{j=0}^2 \int_{\rho(1+e^{i\varphi/n})}^{\rho(1+E\,e^{i\varphi/n})} \frac{h_j(z) \log^j\displaystyle\frac{1}{1-z/\rho}}{(z-\rho)^3z^{n+1}} dz
\]
for some $E \geq r$ (depending on~$\varphi$ but not on~$n$).
The substitution $z=\rho (1 + e^{i \varphi} t/n)$ yields
\[
  L_+(n)
  = \frac{1}{2\pi}
    \sum_{j=0}^2 \int_{1}^{E\,n}
      \frac
	{h_j\bigl(\rho (1 + e^{i\varphi} t/n)\bigr) \log^j(-e^{i\varphi} n/t)}
        {(\rho e^{i\varphi} t/n)^3 \rho^{n+1} (1 + e^{i\varphi} t/n)^{n+1}}
      \frac{\rho e^{i\varphi}}n dt.
\]
When $\varphi > 0$ is small enough, one has
$\log(-e^{i\varphi} n/t) = i (\varphi - \pi) + \log(n/t)$,
and the integration segment is contained in the disk $|z - \rho| \leq r$,
so that $|h_j(z)| \leq b_j$ in the integrand.
Therefore the modulus of the integral satisfies
\[
  |L_+(n)|
  \leq
    \rho^{-n}n^2
      \cdot \frac{B(\pi + \log n)}{2\pi\rho^3}
      \cdot \int_1^\infty t^{-3} \left(1 + \frac{t \cos \varphi}{n}\right)^{-n-1} dt
\]
where
\[
  \int_1^\infty t^{-3} \left(1 + \frac{t \cos \varphi}{n}\right)^{-n-1} dt
  \leq \int_1^\infty \left(1 + \frac{t \cos \varphi}{n}\right)^{-n-1} dt
  = \frac{1}{\cos \varphi} \left(1 + \frac{\cos \varphi}{n}\right)^{-n}.
\]
The right-hand side is decreasing, and is bounded by $1/\cos\varphi$ as soon as $n \geq 2$ and $\varphi < \pi/3$.
The same reasoning applies to the integral over the other part of $\mL(n)$, with the sole difference that $\varphi$ is replaced by~$-\varphi$, so that the logarithmic factor in the integrand becomes
$i (- \varphi + \pi) + \log(n/t)$.
\end{proof}

\subsubsection{Bounding the Integral on the Big Circle}
\begin{figure}
\centering
\includegraphics{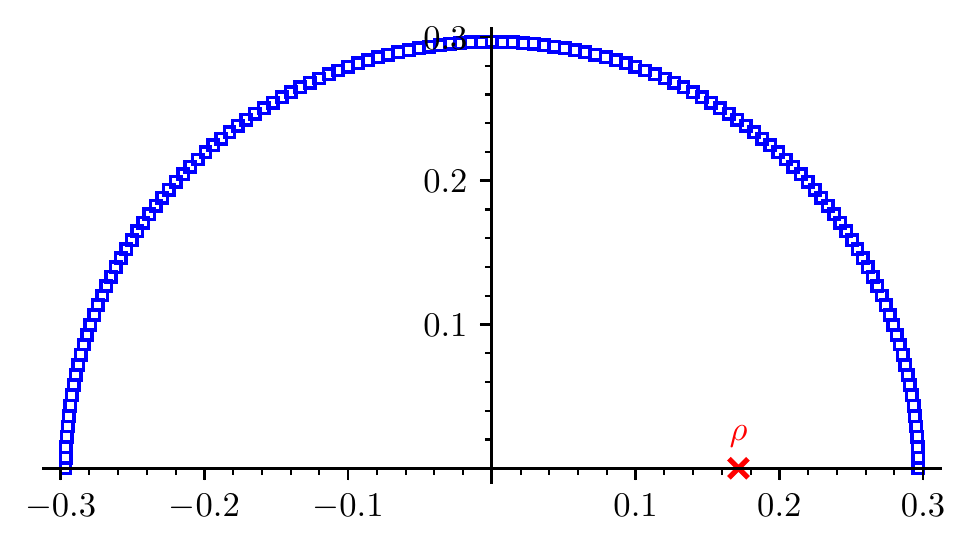}
\caption{The overlapping rectangles used to establish Proposition~\ref{prop:upperboundB}.}
\label{fig:rectangles}
\end{figure}

Finally, we can bound the integral over the big circle.
\begin{proposition}
\label{prop:upperboundB}
For all $n\in\N$, 
\[ \left|\frac{1}{2\pi i}\int_{\mB} \frac{g(z)}{z^{n+1}}dz\right| \leq 1753.15 \, R^{-n}. \]
\end{proposition}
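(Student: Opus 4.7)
The starting point is the trivial ML-type estimate
\[
  \left|\frac{1}{2\pi i}\int_{\mB} \frac{g(z)}{z^{n+1}}dz\right|
  \leq \frac{\mathrm{length}(\mB)}{2\pi\,R^{n+1}}
    \max_{z\in\mB} |g(z)|
  \leq R^{-n} \max_{z\in\mB} |g(z)|,
\]
so the whole proposition reduces to establishing a \emph{uniform, $n$-independent} numerical bound on $|g(z)| = |f(z) - \ell(z)|$ along the big arc $\mB$. Splitting $|g(z)| \leq |f(z)| + |\ell(z)|$, the term $|\ell(z)|$ is easy: by Remark~\ref{rk:analyticity domains} and the definition of $\ell$, for $z\in\mB$ one has $|z-\rho| \geq R-\rho$ and $|1-z/\rho| \leq R/\rho+1$, so
\[
  |\ell(z)| \leq |C_1|(R-\rho)^{-4} + |C_2|(R-\rho)^{-4}\bigl(\log(R/\rho+1) + \pi\bigr)
\]
gives an explicit numerical bound using the intervals for $C_1,C_2$ computed earlier.

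The real work is bounding $|f(z)|$ on $\mB$. Since $f$ is D-finite and analytic on~$\Delta$, but has no simple closed-form expression, I would follow the strategy suggested by Figure~\ref{fig:rectangles}: cover $\mB$ with a finite family of overlapping axis-aligned rectangles $K_1,\dots,K_m$, each contained in $\Delta$ and each small enough that it is contained in an open disk centered at some point $z_k$ on which~$f$ admits a convergent Taylor expansion. Concretely, for each rectangle pick a base point~$z_k \in K_k$ together with a polygonal path $\gamma_k$ from $0$ to $z_k$ staying in $\Delta$, and use \texttt{numerical\_transition\_matrix} from \texttt{ore\_algebra} to compute certified enclosures of the Taylor coefficients of $f$ at $z_k$ (applied to the initial condition vector $(0,0,72)$ from Example~\ref{ex:Abasis}). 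From these coefficients plus the tail-bound machinery of \cite{Mezzarobba2019} — already used in the proof of Lemma~\ref{lem:upperboundondisk} — one obtains an interval enclosure of
\[
  M_k := \sup_{z\in K_k} |f(z)|
\]
by summing the moduli of the (enclosed) Taylor coefficients times $\rho_k^j$, where $\rho_k$ is the circumradius of $K_k$ around~$z_k$, plus a rigorous remainder. Taking $M_f = \max_k M_k$ and combining with the $|\ell|$ bound yields the desired uniform bound $M$ on $|g|$ along $\mB$, and then $2\pi R \cdot M/(2\pi R^{n+1}) = M\,R^{-n}$ with $M \leq 1753.15$ finishes the proof.

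The main obstacle is quantitative, not conceptual. The rectangles must simultaneously (i) cover all of~$\mB$, (ii) avoid the slit $[\rho,1]$ and the other singularities in $\Xi \cap \{|z|\leq 1\}$ (i.e.\ $0,1,3+2\sqrt{2}$), and (iii) be small enough for the Taylor tails at $z_k$ to be sharp, since $R$ is only marginally smaller than $\rho+r = \rho+1/8$ and so $\mB$ passes close to the boundary of the disk of convergence used in Lemma~\ref{lem:upperboundondisk}. Making the bound as tight as~$1753.15$ therefore requires a careful choice of the number and size of rectangles and of the truncation order and working precision in each numerical analytic continuation. Crucially, the bound is produced \emph{once and for all}, independently of~$n$; this is precisely what distinguishes the big-arc contribution from the small-arc and line-segment contributions, and what makes the factor $R^{-n}$ (with $R>\rho$) ultimately negligible compared to the main-term estimate of Proposition~\ref{prop:lowerbound}.
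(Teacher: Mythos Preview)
Your strategy---reduce to a uniform bound on $|g|$ along $\mB$ via the ML estimate, then cover $\mB$ by rectangles and use rigorous numerical analytic continuation---is exactly what the paper does. Two points of divergence are worth noting.

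First, the paper observes that $\overline{f(z)} = f(\overline z)$ (and likewise for $\ell$), so it suffices to cover only the upper half of $\mB$; this halves the computational effort but is otherwise cosmetic.

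Second, and more substantively, the paper does \emph{not} split $|g| \leq |f| + |\ell|$. Instead it computes interval enclosures of $f(z)$ and of $\ell(z)$ on each rectangle (using \texttt{numerical\_solution()} for~$f$ and the closed form for~$\ell$) and subtracts them as complex intervals, bounding $|f-\ell|$ directly. This matters near the endpoints of $\mB$, where $|z-\rho| \approx R - \rho$ is just under $1/8$: there both $|f|$ and $|\ell|$ are individually of order $(R-\rho)^{-4} \log \approx 4\cdot 10^3$, whereas $g = f - \ell$ has leading singular part only $(z-\rho)^{-3}$ and is roughly an order of magnitude smaller. Your triangle-inequality route throws away this cancellation and would yield a constant noticeably larger than $1753.15$, so it would not prove the proposition \emph{as stated}. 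It would, however, still give a bound of the form $C\,R^{-n}$ with a modest~$C$, and since $(\rho/R)^n$ decays exponentially this is entirely adequate for the combined estimate in Proposition~\ref{prop:upperbound} and the final positivity argument. So your plan is sound for the paper's purposes; to recover the exact constant, compute enclosures of $f$ and $\ell$ on each rectangle and subtract before taking the modulus.
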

\begin{proof}
Standard integral bounds imply 
\[ \left|\frac{1}{2\pi i}\int_{\mB} \frac{g(z)}{z^{n+1}}dz\right| \leq R^{-n} \cdot \max_{z \in \mB} |g(z)| = R^{-n} \cdot \max_{z \in \mB} |f(z)-\ell(z)|. \]
Since we know $\ell(z)$ in closed form, the stated upper bound follows bounding $f(z)$ on the circle $|z|=R$. In fact, because $\overline{f(z)}=f(\overline{z})$ it is sufficient to upper bound $f(z)-\ell(z)$ on the upper half of $|z|=R$. This is accomplished by covering this half-circle by overlapping rectangles with rational coordinates, displayed in Figure~\ref{fig:rectangles}, then rigorously computing bounds for $f(z)$ and $\ell(z)$ on these rectangles.

Numeric regions containing $f(z)$ on each rectangle are computed in Sage using the \Verb#numerical_solution()# method of differential operators to solve the differential equation~\eqref{eq:mainDiff} in interval arithmetic.
This method implements a strategy very similar to the one we employed to bound the functions~$h_j$ in the proof of Lemma~\ref{lem:upperboundondisk}---but limited to the simpler case where the function to be evaluated is a solution of the differential equation over a domain free of singularities, as opposed to a function obtained starting from a solution by factoring out a singular part.
\end{proof}

\section{Further Remarks}
\label{sec:finalremarks}
We end with some final remarks.

\begin{remark}
Yu and Chen~\cite{YuChen2020} give the sequence $(d_n)$ as a nested sequence of binomial sums. Such a sequence can be algorithmically written as the diagonal of a multivariate rational power series~\cite{BostanLairezSalvy2017}, and then for sufficiently large $n$ as an explicit multivariate saddle-point integral~\cite{PemantleWilson2013,Melczer2020}. It is theoretically possible to prove Theorem~\ref{thm:mainthm} through explicit bounds for such saddle-point integrals; this approach is less practical than going through the singularity analysis above but would give explicit constants (instead of certified intervals) for the leading asymptotic terms of $d_n$. A hybrid approach, using multivariate techniques to derive the leading asymptotic term with explicit coefficients then using the differential equation to bound some of the sub-dominant terms, is also possible.
\end{remark}

\begin{remark}
\label{rem:fpositive}
While the proof of Conjecture~\ref{conj:YuChen2} is interesting in its own right,
Yu and Chen's uniqueness result only requires that the function~$f(z)$ takes positive values on the real interval $z \in (0, \rho)$ \cite[Sec.~1.3, III]{YuChen2020}.
This weaker statement is easier to prove using rigorous numerics than the positivity of the coefficient sequence.
The idea is to split the interval $[\varepsilon, \rho - \varepsilon]$ into subintervals over which we can evaluate~$f$ accurately enough to check that it is positive,
handling the limits $z \to 0$ and $z \to \rho$ as in the proof of Lemma~\ref{lem:upperboundondisk}.

The presence of an apparent singularity $z_0 = 0.019\dots$ of~\eqref{eq:mainDiff} in the interior of the interval causes a small complication, for \Verb#numerical_solution()# currently does not support evaluation on non-point intervals containing singular points.
One way around the issue would be to treat this singularity like~$0$ and~$\rho$.
As a quicker alternative, we perform a \emph{partial desingularization} of~\eqref{eq:mainDiff}, yielding a new equation satisfied by~$f$ that does not have~$z_0$ as a singularity while not being as large and difficult to solve numerically as the fully desingularized equation of Lemma~\ref{lem:singularities}.

Using this new equation, no additional subinterval besides the neighborhoods of $0$ and~$\rho$ turns out to be necessary.
Indeed, one can show that the tail $\sum_{n=58}^{\infty} d_n z^n$ of the series expansion of~$f$ at the origin is bounded by~$1.71$ for $|z| \leq r_0 = 0.0675$.
As $d_0 = 72$ and we have already checked that $d_n > 0$ for all $n \leq 1000$, this implies that $f(z) > 0$ for $0 < z < r_0$.
Then, reusing the results of the computations done for the proof of Lemma~\ref{lem:upperboundondisk} and its notation, one has
$|u(w) - \tilde u(w)| \leq \hat u(\rho-r_0) \leq m = 7.82 \cdot 10^{-9}$
for $w \leq \rho - r_0$.
We rewrite the local expansion of~$f$ in terms of $-w = \rho - z$ and $- \log(-w)$, both positive for $r_0 < z < \rho$, and subtract
$m\, \bigl(1 - \log(\rho - z) - (1/2) \log^2(\rho - z)\bigr)$
from its explicitly computed order-50 truncation to obtain a lower bound on $f(z)$.
This lower bound is an explicit polynomial in~$w$ and $\log(-w)$ that can be verified to take positive values for $r_0 - \rho < w < 0$.
Details of the calculations can be found in the accompanying Sage notebook.
\end{remark}

\begin{remark}
The method employed here to study the sequence~$(d_n)$ can be used, more generally, to produce
approximations with error bounds
\[
  u_n = \rho^{-n} \, n^{\alpha} \sum_{k=0}^K \sum_{j=0}^J [c_{k,j} \pm \varepsilon_{k,j}] \, \frac{\log^j n}{n^k},
  \qquad
  n \geq n_0,
\]
of sequences~$u_n$ whose generating series satisfy linear differential equations with polynomial coefficients and regular dominant singularities,
though it is bound to yield trivial results in some ``difficult'' cases due to the decidability issues mentioned in Section~\ref{sec:history}.
It would be interesting to understand exactly how general it can be made,
and how to turn it into a practical algorithm that would automatically choose judicious values for all parameters.
\end{remark}

\section{Acknowledgments}
The authors thank Thomas Yu for bringing the uniqueness of the Canham model, and its connection to integer sequence positivity, to our attention.

\bibliographystyle{alpha}
\bibliography{bibl}

\section*{Appendix}
\label{sec:appendix}
Here we list an explicit recurrence and differential equation satisfied by the sequence $(d_n)$ and its generating function $f(z)$, respectively. The sequence $(d_n)$ satisfies the recurrence
\begin{equation}
0= \sum_{k=0}^7 r_k(n)d_{n+k}
\label{eq:mainRec}
\end{equation}
where
{\scriptsize
\begin{align*}
r_0(n) &= -(n+8)(n+7)(12232n^3+298144n^2+2412586n+6469077)(n+6)^2\\
r_1(n) &=  (n+8)(183480n^6+7655560n^5+131977142n^4+1202876299n^3+6112196895n^2+16418149668n+18219511026)\\
r_2(n) &=  -(n+8)(941864n^6+38326904n^5+644300514n^4+5727711699n^3+28407144241n^2+74557779538n+80949464718)\\
r_3(n) &=  (1993816n^7+97303624n^6+2021855198n^5+23184921987n^4\\
&\hspace{2.35in}+158457515673n^3+645518710454n^2+1451619424860n+1390493835900)\\
r_4(n) &=  (-1993816n^7-98090344n^6-2054897438n^5-23758375953n^4\\
&\hspace{2.4in}-163720428321n^3-672459054524n^2-1524577250976n-1472211879228)\\
r_5(n) &= (n+6)(941864n^6+40789672n^5+730497394n^4+6921881565n^3+36590122947n^2+102300885158n+118218544398) \\
r_6(n) &= (n+6)(183480n^6+7756760n^5+135519142n^4+1252328453n^3+6456460129n^2+17612930492n+19872693550)\\
r_7(n) &= (n+7)(n+6)(12232n^3+215600n^2+1256970n+2435511)(n+8)^2
\end{align*}
}

\noindent
and the generating function $f(z)$ is a solution $F(z)=f(z)$ of the differential equation
\begin{equation}
0= \sum_{k=0}^3 c_k(z)F^{(k)}(z),
\label{eq:mainDiff}
\end{equation}
where
{\scriptsize
\begin{align*}
c_3(z) &= 8388593z^2(3z^4 - 164z^3 + 370z^2 - 164z + 3)(z + 1)^2(z^2 - 6z + 1)^2(z - 1)^3 \\
c_2(z) &= 8388593z(z + 1)(z^2 - 6z + 1)(66z^8 - 3943z^7 + 18981z^6 - 16759z^5 - 30383z^4 + 47123z^3 - 17577z^2 + 971z - 15)(z - 1)^2 \\
c_1(z) &= 16777186(z - 1)(210z^{12} - 13761z^{11} + 101088z^{10} - 178437z^9 - 248334z^8 \\
&\hspace{1.5in} + 930590z^7 - 446064z^6 - 694834z^5 + 794998z^4 - 267421z^3 + 24144z^2 - 649z + 6) \\
c_0(z) &= 6341776308z^{12} - 427012938072z^{11} + 2435594423178z^{10} - 2400915979716z^9 \\
& \hspace{1.5in} - 10724094731502z^8 + 26272536406048z^7 - 8496738740956z^6 - 30570113263064z^5 \\
& \hspace{1.5in} + 39394376229112z^4 - 19173572139496z^3 + 3825886272626z^2 - 170758199108z + 2701126946.
\end{align*}
}

\end{document}